\documentclass[12pt]{elsarticle}
\usepackage{amssymb}
\usepackage{amsthm}
\usepackage{amsmath}
\usepackage{graphics}
\newtheorem{theorem}{Theorem}[section]
\newtheorem{lemma}[theorem]{Lemma}
\newtheorem{corollary}[theorem]{Corollary}
\begin{document}
\begin{frontmatter}
\title{Higher order dispersive effects in regularized Boussinesq equation}

 \author[itu]{Goksu Oruc }
 \author[isik]{Handan Borluk\corref{cor1} }
 \ead{handan.borluk@kemerburgaz.edu.tr}
 \cortext[cor1]{Corresponding author}
 \author[itu]{Gulcin M. Muslu}
 \address[itu]{Istanbul Technical University, Department of Mathematics, Maslak 34469,
         Istanbul,  Turkey.}
 \address[isik]{Istanbul Kemerburgaz University, Department of Basic Sciences, Bagcilar 34217,
         Istanbul,  Turkey.}
\begin{abstract}
In this paper,  we consider the higher order Boussinesq (HBq) equation which models the bi-directional propagation of longitudinal waves in various continuous media. The equation contains the higher order effects of frequency dispersion. The present study is devoted to the numerical investigation of the HBq equation.  For this aim a numerical scheme combining
 the Fourier pseudo-spectral method in space  and a Runge Kutta method in time is  \mbox{constructed}. The convergence of semi-discrete scheme is proved in an appropriate Sobolev space. To investigate the
 higher order dispersive effects and nonlinear effects on the solutions of HBq equation,
 propagation of single solitary  wave, head-on collision of solitary waves and blow-up solutions are considered.
\end{abstract}
\begin{keyword}
The higher-order Boussinesq equation  \sep Fourier pseudo-spectral method \sep Solitary waves
 \sep Head-on collision of  solitary waves \sep Blow-up

\MSC[2010] 65M70  \sep 35C07

\end{keyword}
\end{frontmatter}

\renewcommand{\theequation}{\arabic{section}.\arabic{equation}}
\setcounter{equation}{0}
\section{Introduction}

The first attempt to describe the solitary waves of Scott Russell's experiment in 1834,  theoretically, was  made
by Boussinesq \cite{bouss1, bouss2, bouss3}. To model the motion of the long waves in shallow water he proposed the Boussinesq equation
\begin{equation}
u_{tt}=u_{xx}-\frac{3\beta}{2}(u^2)_{xx}+\frac{\beta}{3}u_{xxxx}. \label{bossinesq}
\end{equation}
We refer the reader to \cite{christov1, christov2} for a more detailed information on the origins of Boussinesq equation.
As the  equation \eqref{bossinesq}  turned out to be incorrect
in the sense of Hadamard, several re-derivations are made.
The most common well-posed re-derivation is proposed  for ion-sound waves and for quasi-continuum approximation of dense lattices \cite{bogolubsky, rosenau1} and is  called the "improved" Boussinesq equation (IBq)
\begin{equation}
u_{tt}=u_{xx}+u_{xxtt}+(f(u))_{xx} \label{boussinesq}.
\end{equation}
Unlike the Boussinesq equation \eqref{bossinesq}, the IBq equation is not fully integrable.
The Boussinesq equation and its re-derivations include the lowest order effects of frequency dispersion. To include the higher order effects of dispersion,  Rosenau \cite{rosenau} proposed the higher order Boussinesq (HBq) equation
\begin{equation}\label{hbq1}
u_{tt}= u_{xx}+ \eta_1u_{xxtt}-\eta_2u_{xxxxtt}+(f(u))_{xx}
\end{equation}
where  $\eta_1$ and $\eta_2$ are real positive constants.
In Rosenau's own words "lowest order dispersion shows some anomalous which can be resolved only if higher order discrete effects are included."
A more recent derivation of the HBq equation is given by Duruk et al. in \cite{duruk2} to model the bi-directional propagation of longitudinal waves in an infinite, nonlocally elastic medium. Eliminating the effects of the higher order dispersion, the HBq equation reduces to the IBq  equation.

\par
The local and global well-posedness of the Cauchy problem for the HBq equation together with the initial conditions
\begin{equation}
u(x,0)=\phi(x), \hspace*{20pt} u_t(x,0)=\psi(x) \label{hbq2}
\end{equation}
was proved  in \cite{duruk2} in the Sobolev space $H^s$ with any $s>1/2$.
The same authors also studied  the qualitative properties of a   general class of nonlocal equations which includes the HBq equation as a special case in \cite{duruk1}.
\par
The present study addresses the answers of the following very natural questions:
How the higher-order dispersive term and the various power type nonlinear terms, i.e. $f(u)=\pm u^p,~~p>1$,  affect the  solutions?
Do  solutions  of  the  HBq  equation converge to solutions of the generalized IBq equation as $\eta_2\rightarrow 0^+$?
To answer these questions we need an efficient numerical method. We therefore propose a numerical method combining a Fourier pseudo-spectral method for the space discretization and a fourth-order Runge-Kutta scheme for time discretization.
To the best of our knowledge, there is no  numerical study for HBq equation although there are lots of numerical studies  to solve the generalized IBq equation (see \cite{borluk} and references therein).
In Section 2,
we  review some properties of the HBq equation and then we derive the solitary wave solutions.
In Section 3, the semi-discrete scheme is introduced for the HBq equation and the convergence
of the semi-discrete scheme is proved in an appropriate space.
In Section 4, we propose the fully-discrete Fourier pseudo-spectral scheme  and show how to formulate it for the HBq equation.
In Section 5, the numerical scheme is tested for accuracy and convergence rate. The effect of extra dispersion and the nonlinearity on the solutions of the HBq equation by considering various problems, like propagation of single solitary wave, head-on collision of two solitary waves and the blow-up solutions are discussed in Section 6.

\renewcommand{\theequation}{\arabic{section}.\arabic{equation}}
\setcounter{equation}{0}
\section{Properties of the higher-order Boussinesq equation}

\par In this section, we  will review the conservation laws of  the HBq equation and we will derive the solitary
wave solution of the HBq equation by using ansatz method.

\par (a) {\it Conservation laws:} Three conserved integrals in terms of $U$ where $u=U_x$  for the
HBq equation are given as
\begin{eqnarray}
 I_1(t)&=&\int_{-\infty}^{\infty} U_t~dx, \label{mass} \\
 I_2(t)&=&\int_{-\infty}^{\infty} U_x[U_t-\eta_1 U_{xxt}+\eta_2 U_{xxxxt}]~dx  \label{momentum} \\
 I_3(t)&=&\int_{-\infty}^{\infty} [(U_t)^2+2F(U_x)+(U_x)^2+\eta_1 (U_{xt})^2+\eta_2 (U_{xxt})^2]~dx \label{energy}
\end{eqnarray}
with $f(s)=F^\prime (s)$. The derivation of the integrals can be found in \cite{rosenau,duruk1}.

\par (b) {\it Solitary wave solution:} We use the ansatz method which is the most effective direct method to
construct the solitary wave solutions of the nonlinear evolution equations. We look for the solutions
of the form $u=u(\xi)$ where $\xi=x-ct-x_0$ under asymptotic boundary conditions.
Substituting the solution $u=u(\xi)$ into equation \eqref{hbq1} and then integrating twice with respect to $\xi$, we have
\begin{equation}
(c^2-1)u-\eta_1 c^2 \frac{d^2 u}{d\xi^2}+\eta_2 c^2 \frac{d^4 u}{d\xi^4}=u^p. \label{hode}
\end{equation}
We now look for the solution of the form
\begin{equation}
u(\xi)=A \mbox{sech}^{\gamma} (B\xi). \label{ansatz}
\end{equation}
If we substitute the above  ansatz into the equation  \eqref{hode},  the solitary wave solution of the HBq equation is  given in the form
\begin{eqnarray} \label{soliter}
&&\hspace{80pt}u(x,t)=A\left\{ {\mbox{sech}}^4\left( B(x-ct-x_0)\right) \right\}^{\frac{1}{p-1}},\hspace{-8pt}
\\  && \hspace{-40pt} A=\left[\frac{\eta_1^2 c^2(p+1)~(p+3)~(3p+1)}{2\eta_2~(p^2+2p+5)^2}\right]^{\frac{1}{p-1}},
 \hspace{10pt} B=\left[\frac{\eta_1(p-1)^2}{4\eta_2(p^2+2p+5)}\right]^{\frac{1}{2}},
\\ &&\hspace{50pt} c^2=\left\{{1-\left[\frac{4\eta_1^2~(p+1)~^2}{\eta_2~(p^2+2p+5)^2}\right]}\right\}^{-1}
\end{eqnarray}
where $A$ is the amplitude and $B$ is the inverse width of the solitary wave.
Here $c$ represents the  velocity of the solitary wave centered at $x_0$ with $c^2>1$.

\setcounter{equation}{0}
\section{The semi-discrete scheme}
\setcounter{equation}{0}
\subsection{Notations and Preliminaries}
Throughout this section, $C$ denotes a generic constant.
We use $(\cdot, \cdot)$ and $\|\cdot\|$ to denote the inner product and the norm of  $L^2(\Omega)$
defined by
\begin{equation}
(u,v)=\int_\Omega u(x)v(x)dx, \hspace*{20pt} \|u\|^2=(u,u)
\end{equation}
for $\Omega=(-L,L)$, respectively. Let  $S_N$ be the space of trigonometric polynomials of degree $N/2$ defined as
\begin{equation}
S_N=\mbox{span}\{e^{{ik\pi x}/{L} } | -N/2\leq k\leq N/2-1 \}
\end{equation}
where $N$ is  a positive integer. $P_N:L^2(\Omega)\rightarrow S_N$ is an orthogonal projection
operator
\begin{equation}
P_Nu(x)=\sum_{k=-N/2}^{N/2-1} \hat{u}_k~ e^{{ik\pi x}/{L}}
\end{equation}
such that for any $u\in L^2(\Omega) $
\begin{equation}
(u-P_Nu,\varphi)=0, \hspace*{20pt} \forall \varphi \in S_N.
\end{equation}
The projection operator $P_N$ commutes with derivative in the distributional sense:
\begin{equation}\label{commute}
D_x^n P_Nu=P_ND_x^nu \hspace*{20pt} \mbox{and} \hspace*{20pt} D_t^n P_Nu=P_ND_t^n u.
\end{equation}
Here $D_x^n$ and $D_t^n$ stand
for the $n$th-order classical partial derivative with respect to $x$ and $t$, respectively. $H^s_p(\Omega)$ denotes the periodic Sobolev space equipped with the norm
\begin{equation}
\|u\|_s^2= \sum_{k=-\infty}^{\infty} (1+|k|^{2s})|\hat{u}_k|^2
\end{equation}
where $\hat{u}_k=\displaystyle\frac{1}{2L} \int_\Omega u(x)e^{{ik\pi x}/{L}} dx$.
The Banach space $X_s=C^1([0,T];H^s_p(\Omega) )$
is the space of all continuous functions in $H^s_p(\Omega)$ whose distributional derivative  is also in $H^s_p(\Omega)$, with norm
$~\left\Vert u \right\Vert_{X_s}^2=\displaystyle \max_{ t \in [0,T]}(\left\Vert u(t) \right\Vert_{s}^2+\left\Vert u_t (t) \right\Vert_{s}^2)$. In order to prove the convergence of semi-discrete scheme, we need  following lemmas.

\begin{lemma}\cite{canuto, rashid} \label{lemma}
For any real $0\leq \mu \leq s$, there exists a constant $C$ such that
\begin{equation}
\|u-P_N u\|_\mu \leq C N^{\mu-s} \|u\|_s,  \hspace*{20pt} \forall u\in H^s_p(\Omega).
\end{equation}
\end{lemma}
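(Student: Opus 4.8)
The statement is the classical spectral-truncation error estimate, so the plan is to work entirely on the Fourier side, where the $\|\cdot\|_s$ and $\|\cdot\|_\mu$ norms are diagonal. First I would note that $u\in H^s_p(\Omega)$ makes the Fourier series converge in $H^s_p$, and that every mode discarded by $P_N$ (namely $k\ge N/2$ or $k\le -N/2-1$) satisfies $|k|\ge N/2$; hence by Parseval, dominating the truncated tail by the full tail,
\[
\|u-P_Nu\|_\mu^2\le\sum_{|k|\ge N/2}\bigl(1+|k|^{2\mu}\bigr)\,|\hat u_k|^2 .
\]

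The key step is to rewrite the generic summand as $\bigl(1+|k|^{2\mu}\bigr)=\dfrac{1+|k|^{2\mu}}{1+|k|^{2s}}\,\bigl(1+|k|^{2s}\bigr)$ and pull the supremum of the ratio out of the sum, obtaining
\[
\|u-P_Nu\|_\mu^2\le\Bigl(\sup_{|k|\ge N/2}\frac{1+|k|^{2\mu}}{1+|k|^{2s}}\Bigr)\sum_{|k|\ge N/2}\bigl(1+|k|^{2s}\bigr)|\hat u_k|^2\le\Bigl(\sup_{|k|\ge N/2}\frac{1+|k|^{2\mu}}{1+|k|^{2s}}\Bigr)\|u\|_s^2 .
\]
It then remains to estimate that ratio. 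Assuming, as one always may, that $N\ge2$ so that $|k|\ge N/2\ge1$, one has $1+|k|^{2\mu}\le2|k|^{2\mu}$ and $1+|k|^{2s}\ge|k|^{2s}$, so the ratio is at most $2|k|^{2(\mu-s)}$; since $\mu-s\le0$, the map $k\mapsto|k|^{2(\mu-s)}$ is non-increasing for $|k|\ge N/2$, so its supremum there is attained at $|k|=N/2$ and equals $2\,(N/2)^{2(\mu-s)}=C_0\,N^{2(\mu-s)}$ with $C_0=2^{\,1-2(\mu-s)}$. Substituting and taking square roots yields $\|u-P_Nu\|_\mu\le C\,N^{\mu-s}\|u\|_s$ with $C=C_0^{1/2}$.

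I do not expect a real obstacle: the only care points are that $C$ depends on $\mu$ and $s$ but not on $N$ or $u$, the degenerate case $\mu=s$ (where the ratio is trivially bounded and $N^{\mu-s}=1$), and the harmless normalization $N\ge2$. I would therefore present the argument in exactly the order above — identify the discarded tail, apply Parseval, factor out the worst mode ratio, and estimate that ratio by monotonicity — which reduces the whole lemma to a two-line Fourier-coefficient computation.
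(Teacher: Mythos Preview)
Your argument is correct and is precisely the standard Fourier-side proof of this truncation estimate. Note, however, that the paper does not supply its own proof of this lemma: it is quoted as a known result with citations to \cite{canuto, rashid}, so there is no paper proof to compare against. Your derivation would serve as a self-contained justification; the only minor cosmetic point is that the constant $C=2^{\,1/2-(\mu-s)}$ you obtain depends on $\mu$ and $s$, which is consistent with the lemma as stated.
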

\begin{lemma}\cite{runst} \label{lemma2}
Assume that $f\in C^k(\mathbb{R})$, $u,v\in H^s(\Omega)\cap L^\infty (\Omega)$ and $k=[s]+1$, where $s\geq 0$. Then we have
\begin{equation}
\|f(u)-f(v)\|_s \leq C(M) \|u-v\|_s
\end{equation}
if $\|u\|_\infty \leq M, \|v\|_\infty \leq M, \|u\|_s \leq M$  and $\|v\|_s \leq M$, where $C(M)$ is a constant dependent
on $M$ and $s$.
\end{lemma}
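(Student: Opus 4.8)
The plan is to read the inequality as a Lipschitz (tame) bound for the Nemytskii operator $u\mapsto f(u)$, establishing it first for integer $s$ by a direct chain-rule computation and then for fractional $s$ by a Littlewood--Paley argument. Throughout I would set $w=u-v$ and note that an additive constant in $f$ cancels in the difference, so we may assume $f(0)=0$. The two analytic tools I would prepare at the outset are: (i) the multiplicative Gagliardo--Nirenberg inequality, which gives, for orders $j_1,\dots,j_n\ge 0$ with $\sum_i j_i=m$ and functions $g_1,\dots,g_n\in H^m_p(\Omega)\cap L^\infty(\Omega)$,
$$ \Bigl\| \prod_{i=1}^n D_x^{j_i} g_i \Bigr\| \le C \sum_{i=1}^n \|g_i\|_m \prod_{l\ne i}\|g_l\|_\infty ; $$
and (ii) the one-dimensional embedding $H^s_p(\Omega)\hookrightarrow L^\infty(\Omega)$, valid for $s>1/2$ (the regime in which the lemma is applied here), so that factors measured in $\|\cdot\|_\infty$ can be absorbed into $\|\cdot\|_s$; in the residual range $0\le s\le 1/2$ the separately assumed $L^\infty$ bounds become essential and one must fall back on the function-space results of \cite{runst}.

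For integer $s=m$ (so $k=m+1$), recall that on the periodic domain $\|\cdot\|_m$ is equivalent to $\|\cdot\|+\|D_x^m\cdot\|$, so it suffices to bound the zeroth-order term $\|f(u)-f(v)\|$ and the top-order term $\|D_x^m(f(u)-f(v))\|$. The former follows immediately from the mean value theorem, $\|f(u)-f(v)\|\le (\sup_{|\xi|\le M}|f'(\xi)|)\,\|w\|\le C(M)\|w\|_m$. For the latter I would expand $D_x^m f(u)$ by the Fa\`a di Bruno formula into a finite sum of terms $c_{\mathbf b}\,f^{(r)}(u)\prod_j (D_x^j u)^{b_j}$ with $\sum_j j\,b_j=m$ and $r=\sum_j b_j\le m$, form the difference with the analogous expansion of $D_x^m f(v)$, and telescope by replacing one factor of $u$ by $v$ at a time. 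Each resulting term then carries exactly one differenced factor, either $D_x^j u-D_x^j v=D_x^j w$ or $f^{(r)}(u)-f^{(r)}(v)$. In the first case I pull the bounded coefficient $f^{(r)}(u)$ out in $L^\infty$ and apply (i) to the remaining factors, one of which is $D_x^j w$, yielding $C(M)\|w\|_m$. In the second case the mean value theorem gives $\|f^{(r)}(u)-f^{(r)}(v)\|_\infty\le (\sup_{|\xi|\le M}|f^{(r+1)}(\xi)|)\,\|w\|_\infty$ — the single place where the full hypothesis $f\in C^{m+1}=C^{[s]+1}$ is consumed — while (i) controls the remaining product of $m$ derivatives in $L^2$ and (ii) absorbs $\|w\|_\infty$ into $\|w\|_s$. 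Summing the finitely many terms gives $\|D_x^m(f(u)-f(v))\|\le C(M)\|w\|_m$, completing the integer case.

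For non-integer $s$, which I expect to be the main obstacle, the clean chain-rule bookkeeping no longer closes and I would instead argue by a Littlewood--Paley decomposition. Writing $f(u)-f(v)=w\int_0^1 f'(v+\tau w)\,d\tau=:w\,g$, I would split the product following Bony into the two paraproducts and the resonant term $R(w,g)$, characterize $\|\cdot\|_s$ through the dyadic square function, and estimate each piece using that $g$ inherits $L^\infty$ bounds $C(M)$ from $f\in C^{[s]+1}$ together with the $L^\infty$ bounds on $u,v$. The delicate point is the high--high (resonant) interaction, where one cannot treat $g$ as a fixed multiplier but must exploit exactly the finite smoothness $[s]+1$ of $f$; this is precisely the estimate around which the machinery of \cite{runst} is built, and keeping the smoothness at the sharp value $k=[s]+1$ (rather than $[s]+2$) is what makes the fractional case genuinely harder than the integer one. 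A more elementary alternative, at the cost of one extra derivative on $f$, is to prove the integer endpoint estimates at orders $[s]$ and $[s]+1$ and interpolate; by either route the conclusion is $\|f(u)-f(v)\|_s\le C(M)\|u-v\|_s$ with $C(M)$ depending only on $M$, $s$ and $\sup_{|\xi|\le M}\sum_{j\le [s]+1}|f^{(j)}(\xi)|$.
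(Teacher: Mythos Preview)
The paper does not supply its own proof of this lemma; it is quoted directly from \cite{runst} as a background fact, so your proposal is not competing with any argument in the text. That said, a few remarks on your sketch are in order.

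Your integer-order argument is correct and standard: the Fa\`a di Bruno expansion, telescoping, and the Gagliardo--Nirenberg product inequality close exactly as you describe, and the single appearance of $f^{(r+1)}$ with $r\le m$ is indeed what fixes $k=[s]+1$. Since the only use the paper makes of the lemma is through Corollary~\ref{cor} at $s=2$, this part of your argument already covers everything the paper needs.

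For non-integer $s$ your outline is the right one and is essentially the route taken in \cite{runst}: write $f(u)-f(v)=w\cdot g$ with $g=\int_0^1 f'(v+\tau w)\,d\tau$ and paralinearize. The gap is that, as you yourself concede, the resonant piece cannot be closed without invoking the composition estimates that are precisely the content of \cite{runst}, so at that step your proof is circular rather than self-contained. Your interpolation alternative is honest about requiring $f\in C^{[s]+2}$ and therefore does not recover the sharp hypothesis. None of this is an error in what you wrote; it just means your proposal gives a complete proof for integer $s$ and a correct but non-self-contained roadmap for fractional $s$, which for the purposes of this paper is more than adequate.
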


\noindent
\begin{corollary} \label{cor} Assume that $f\in C^3(\mathbb{R})$ and  $u,v\in H^2(\Omega) \cap L^\infty (\Omega)$ then
\begin{equation}
\|f(u)-f(v)\|_2 \leq C \|u-v\|_2
\end{equation}
where the constant C depends on $\|u\|_\infty , \|v\|_\infty $ and $\|u\|_2 , \|v\|_2$.
\end{corollary}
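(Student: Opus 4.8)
The plan is to derive Corollary \ref{cor} as a direct specialization of Lemma \ref{lemma2}. The idea is that the corollary is exactly the case $s=2$ of the lemma, once we check that the hypotheses match up. So first I would set $s=2$ in Lemma \ref{lemma2}. Then $k=[s]+1=[2]+1=3$, so the regularity requirement on $f$ becomes $f\in C^3(\mathbb{R})$, which is precisely the hypothesis of the corollary. Similarly, the requirement $u,v\in H^s(\Omega)\cap L^\infty(\Omega)$ becomes $u,v\in H^2(\Omega)\cap L^\infty(\Omega)$, matching the corollary's hypothesis.

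Next I would address the boundedness constants. Lemma \ref{lemma2} requires a single $M$ with $\|u\|_\infty,\|v\|_\infty,\|u\|_2,\|v\|_2\le M$; given $u,v\in H^2\cap L^\infty$, all four of these quantities are finite, so we may simply take $M=\max\{\|u\|_\infty,\|v\|_\infty,\|u\|_2,\|v\|_2\}$ (or any larger number). With this choice, Lemma \ref{lemma2} yields
\begin{equation}
\|f(u)-f(v)\|_2\le C(M)\|u-v\|_2,
\end{equation}
and since $M$ is determined by $\|u\|_\infty,\|v\|_\infty,\|u\|_2,\|v\|_2$, the constant $C=C(M)$ depends only on these four quantities, which is exactly the dependence claimed in the corollary. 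This completes the argument.

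There is really no substantive obstacle here: the corollary is a bookkeeping consequence of the lemma, and the only thing to be careful about is the arithmetic $[2]+1=3$ (so that $C^3$, not $C^2$ or $C^4$, is the right smoothness class) and the observation that the four norm bounds in Lemma \ref{lemma2} can be consolidated into one constant $M$ without loss. If anything, the mild subtlety worth a sentence is that $H^2(\Omega)\hookrightarrow L^\infty(\Omega)$ already holds in one space dimension by Sobolev embedding, so the explicit ``$\cap L^\infty$'' in the hypotheses is not strictly needed and $\|u\|_\infty,\|v\|_\infty$ are already controlled by $\|u\|_2,\|v\|_2$; but stating it as in the excerpt keeps the reduction to Lemma \ref{lemma2} completely transparent, so I would leave the hypotheses as written and simply invoke the lemma.
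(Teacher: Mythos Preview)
Your proposal is correct and matches the paper's approach: the corollary is stated immediately after Lemma~\ref{lemma2} without a separate proof, since it is precisely the case $s=2$ (hence $k=[2]+1=3$) of that lemma, exactly as you describe.
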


\subsection{Convergence of the semi-discrete scheme}
The semi-discrete Fourier  Galerkin approximation to (\ref{hbq1})-(\ref{hbq2}) is
\begin{eqnarray}
&&u^N_{tt}= u^N_{xx}+ \eta_1 u^N_{xxtt}- \eta_2 u^N_{xxxxtt}+P_Nf(u^N)_{xx}, \label{shbq1}  \\
&& u^N(x,0)=P_N\phi(x), \hspace*{20pt} u^N_t(x,0)=P_N\psi(x) \label{shbq2}
\end{eqnarray}
where $u^N(x,t)\in S_N$ for $0\le t\leq T$. We now state  our main result.

\begin{theorem}
Let  $s\geq 2$ and $u(x,t)$ be the solution of the $2L$ periodic initial value problem \eqref{hbq1}-\eqref{hbq2} satisfying
$u(x,t)\in C^1([0,T];H^s_p(\Omega))$  for any $T>0$ and  $u^N(x,t)$ be the solution of the semi-discrete scheme (\ref{shbq1})-(\ref{shbq2}).
There exists a constant $C$, independent of $N$, such that
\begin{equation}
\|u-u^N\|_{X_2} \leq C(T, \eta_1, \eta_2) N^{2-s}  \|u\|_{X_s}
\end{equation}
for the initial data $\phi,\psi \in H^s_p(\Omega)$.
\end{theorem}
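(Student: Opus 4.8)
The plan is to derive an evolution equation for the error $e = u - u^N$ and apply a Gronwall-type argument in the $X_2$ norm. First I would split the error as $e = (u - P_N u) + (P_N u - u^N) =: \rho + \sigma$, where $\rho$ is the projection error controlled directly by Lemma~\ref{lemma} (giving $\|\rho\|_{X_2} \leq C N^{2-s}\|u\|_{X_s}$, using that the bound is uniform in $t$ and that $P_N$ commutes with $D_t$ by \eqref{commute}), and $\sigma \in S_N$ is the genuinely discrete part that must be estimated by an energy method. Applying $P_N$ to \eqref{hbq1} and using \eqref{commute}, the exact solution satisfies $(P_Nu)_{tt} = (P_Nu)_{xx} + \eta_1 (P_Nu)_{xxtt} - \eta_2 (P_Nu)_{xxxxtt} + P_N f(u)_{xx}$; subtracting \eqref{shbq1} gives
\begin{equation}
\sigma_{tt} - \sigma_{xx} - \eta_1 \sigma_{xxtt} + \eta_2 \sigma_{xxxxtt} = P_N\big(f(u) - f(u^N)\big)_{xx}, \label{erroreq}
\end{equation}
with zero initial data $\sigma(x,0) = 0$, $\sigma_t(x,0) = 0$ (since $u^N(x,0) = P_N\phi$, $u^N_t(x,0) = P_N\psi$).

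Next I would set up the energy functional suggested by \eqref{energy}, namely
\begin{equation}
E(t) = \|\sigma_t\|^2 + \|\sigma_x\|^2 + \eta_1 \|\sigma_{xt}\|^2 + \eta_2 \|\sigma_{xxt}\|^2, \nonumber
\end{equation}
and differentiate in $t$. Pairing \eqref{erroreq} with $\sigma_t$ in $L^2$ and integrating by parts (all boundary terms vanish by $2L$-periodicity), the left side produces exactly $\frac{1}{2}\frac{d}{dt}E(t)$, while the right side becomes $-\big(P_N(f(u)-f(u^N))_x, \sigma_{xt}\big)$. Using $P_N$ self-adjointness and $\sigma_{xt}\in S_N$, this equals $-\big((f(u)-f(u^N))_x, \sigma_{xt}\big)$, which I bound by $\|f(u) - f(u^N)\|_1 \|\sigma_{xt}\| \leq C\|f(u) - f(u^N)\|_2 \, E(t)^{1/2}$. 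To control $\|f(u)-f(u^N)\|_2$ I would invoke Corollary~\ref{cor}: writing $u - u^N = \rho + \sigma$, this is $\leq C(\|\rho\|_2 + \|\sigma\|_2)$. Here one needs the uniform $L^\infty$ and $H^2$ bounds on $u$ and $u^N$ required by the corollary; the bound on $u$ follows from $u \in X_s$ with $s \geq 2$ and Sobolev embedding, and the bound on $u^N$ follows from $\|u^N\|_2 \leq \|P_N u\|_2 + \|\sigma\|_2 \leq \|u\|_{X_s} + \|\sigma\|_2$, which is controlled on any interval where $E(t)$ stays bounded (a standard continuation argument closes this). Finally, $\|\sigma\|_2^2$ is dominated by $E(t)$ together with $\|\sigma\|^2$; the latter I handle by noting $\sigma$ has mean zero issues only in the $k=0$ mode, which can be absorbed since $\|\sigma(t)\| \leq \int_0^t \|\sigma_t\|\,ds$ from the zero initial condition, or more simply by augmenting $E$ with $\|\sigma\|^2$ and using Poincaré on the nonzero modes.

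Putting these together yields $\frac{d}{dt}E(t) \leq C\, E(t) + C N^{2-s}\|u\|_{X_s}\, E(t)^{1/2}$, hence $\frac{d}{dt}E(t)^{1/2} \leq C E(t)^{1/2} + C N^{2-s}\|u\|_{X_s}$, and Gronwall's inequality with $E(0) = 0$ gives $E(t)^{1/2} \leq C(T,\eta_1,\eta_2) N^{2-s}\|u\|_{X_s}$ on $[0,T]$. Since $\|\sigma\|_{X_2}^2$ is comparable to $\max_{[0,T]} E(t)$ (using the $\eta_1, \eta_2 > 0$ coefficients to recover the full $H^2$ norms of $\sigma$ and $\sigma_t$), the triangle inequality $\|u - u^N\|_{X_2} \leq \|\rho\|_{X_2} + \|\sigma\|_{X_2}$ completes the proof. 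The main obstacle I anticipate is the bootstrapping needed to justify the uniform $L^\infty$ and $H^2$ bounds on $u^N$ that Corollary~\ref{cor} requires: one must run a continuity/maximal-interval argument showing that the a priori energy estimate prevents $\|u^N\|_2$ from escaping to infinity before time $T$, so that the Lipschitz constant $C(M)$ from Lemma~\ref{lemma2} can be taken uniform. A secondary technical point is the careful handling of the zeroth Fourier mode of $\sigma$, where the dispersive coefficients $\eta_1, \eta_2$ give no coercivity, but this mode is pinned down by the vanishing initial data.
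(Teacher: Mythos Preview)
Your proof is correct and follows essentially the same strategy as the paper: decompose $u-u^N = \rho + \sigma$ with $\sigma = P_Nu - u^N$, test the error equation against $\sigma_t$, build the same energy $E(t)=\|\sigma_t\|^2+\|\sigma_x\|^2+\eta_1\|\sigma_{xt}\|^2+\eta_2\|\sigma_{xxt}\|^2$, invoke Corollary~\ref{cor} on the nonlinearity, and close with Gronwall. The only cosmetic differences are that the paper bounds the right-hand side directly by $\|(f(u)-f(u^N))_{xx}\|\,\|\sigma_t\|$ (without your extra integration by parts) and recovers the missing $\|\sigma\|^2$ and $\|\sigma_{xx}\|^2$ pieces by adding $(\sigma,\sigma_t)$ and $(\sigma_{xx},\sigma_{xxt})$ to both sides of the energy identity rather than via the time-integration argument you sketch; the paper also does not make explicit the bootstrap for the uniform $H^2$/$L^\infty$ bounds on $u^N$ that you correctly flag.
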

\begin{proof}
Using the triangle inequality, it is possible to write
\begin{equation} \label{u-triangle}
\|u-u^N\|_{X_2}\leq\|u-P_Nu\|_{X_2}+\|P_Nu-u^N\|_{X_2}.
\end{equation}

\noindent
Using Lemma \ref{lemma}, we have the following estimates
\begin{equation}
\|(u-P_Nu)(t)\|_{2}\leq C N^{2-s}\|u(t)\|_{s} \notag
\end{equation}
and
\begin{equation}
\|(u-P_Nu)_t(t)\|_{2}\leq C N^{2-s}\|u_t(t)\|_{s} \notag
\end{equation}
for  $s\geq 2$. Thus, the estimation of the first term at the right-hand side of the inequality \eqref{u-triangle}  becomes
\begin{equation} \label{estimation1}
\|u-P_Nu\|_{X_2}\leq C N^{2-s}\|u\|_{X_s}.
\end{equation}
Now, we need to estimate the second term  $\|P_Nu-u^N\|_{X_2}$ at the right-hand side of the inequality \eqref{u-triangle}.
Subtracting the equation \eqref{shbq1} from \eqref{hbq1}  and taking the inner product with $\varphi\in S_N$ we have
\begin{equation}\label{inner}
\hspace*{-20pt}
\big((u-u^N)_{tt}- (u-u^N)_{xx}- \eta_1 (u-u^N)_{xxtt}+\eta_2 (u-u^N)_{xxxxtt}-(f(u)-P_Nf(u^N))_{xx},~\varphi\big)=0.
\end{equation}
Since
\begin{equation}
\hspace*{-10pt}
\left((u-P_Nu)_{tt}, \varphi\right)=\left((u-P_Nu)_{xx}, \varphi\right)=\left((u-P_Nu)_{xxtt}, \varphi\right)
=\left((u-P_Nu)_{xxxxtt}, \varphi\right)=0
\end{equation}
for all  $\varphi\in S_N$, by \eqref{commute},
 the equation \eqref{inner} becomes
\begin{equation} \label{simpinner}
\begin{split}
\big(\{(P_Nu-u^N)_{tt}- (P_Nu-u^N)_{xx}&-\eta_1 (P_Nu-u^N)_{xxtt}+\eta_2 (P_Nu-u^N)_{xxxxtt} \\
&-(f(u)-f(u^N))_{xx}\},~\varphi\big)=0
\end{split}
\end{equation}
for all $\varphi\in S_N$.
Setting $\varphi=(P_Nu-u^N)_{t}$  in \eqref{simpinner}, using the integration by parts and the spatial periodicity,
a simple calculation shows that
\begin{eqnarray}
&&\left((P_Nu-u^N)_{tt},~(P_Nu-u^N)_{t}\right)=\frac{1}{2}\frac{d}{dt}\|(P_Nu-u^N)_{t}(t)\|^2, \label{norm1}\\
&&\hspace{-11pt}\left((P_Nu-u^N)_{xx},~(P_Nu-u^N)_{t}\right)=-\frac{1}{2}\frac{d}{dt}\|(P_Nu-u^N)_{x}(t)\|^2,\\
&&\hspace{-15pt}\left((P_Nu-u^N)_{xxtt},~(P_Nu-u^N)_{t}\right)=-\frac{1}{2}\frac{d}{dt}\|(P_Nu-u^N)_{xt}(t)\|^2,\\
&&\hspace{-18pt}\left((P_Nu-u^N)_{xxxxtt},~(P_Nu-u^N)_{t}\right)=\frac{1}{2}\frac{d}{dt}\|(P_Nu-u^N)_{xxt}(t)\|^2.  \label{norm3}
\end{eqnarray}
Substituting \eqref{norm1}-\eqref{norm3} in \eqref{simpinner}, we have
\begin{eqnarray} \label{inner2}
&& \hspace*{-40pt} \frac{1}{2}\frac{d}{dt}\left[ \| (P_Nu-u^N)_{t}(t)\|^2 +\|(P_Nu-u^N)_{x}(t)\|^2 + \eta_1 \|(P_Nu-u^N)_{xt}(t)\|^2\right.
\nonumber \\
&& \hspace*{-25pt} +  \left.  \eta_2 \| (P_Nu-u^N)_{xxt}(t) \|^2 \right]
 = \left( (f(u)-f(u^N))_{xx}, (P_N u-u^N)_{t}\right). \notag \\
\end{eqnarray}
In the following, we will estimate the right-hand side of the above equation. Using the Cauchy-Schwarz inequality and  the Corollary \ref{cor}, we have
\begin{eqnarray}\label{right-hand}
\left( (f(u)-f(u^N))_{xx},~(P_Nu-u^N)_{t}\right) &\leq& \|(f(u)-f(u^N))_{xx}\|~\|(P_Nu-u^N)_{t}(t)\|\notag \\
&\leq& \frac{1}{2}\left( \| f(u)-f(u^N)\|_2^2+\|(P_Nu-u^N)_{t}(t)\|^2\right)\notag \\
&\leq& C \left( \|(u-u^N)(t)\|_2^2+\|(P_Nu-u^N)_{t}(t)\|^2\right). \notag \\
\end{eqnarray}
\noindent
Substituting  \eqref{right-hand} in \eqref{inner2} we have
\begin{eqnarray} \label{inner3}
&& \hspace*{-20pt} \frac{1}{2}\frac{d}{dt}\left( \| (P_Nu-u^N)_{t}(t)\|^2 +\|(P_Nu-u^N)_{x}(t)\|^2 +\eta_1 \|(P_Nu-u^N)_{xt}(t)\|^2 \right.  \nonumber \\
&& \hspace*{0.4cm} \left. +\eta_2 \| (P_Nu-u^N)_{xxt}(t) \|^2 \right) \nonumber \\
&&\hspace{2cm}\leq C \left( \|(u-u^N)(t)\|_2^2+\|(P_Nu-u^N)_{t}(t)\|^2\right) \notag \\
&& \hspace{2cm}\leq C \left( \|(u-P_Nu)(t)\|_2^2+ \|(P_Nu-u^N)(t)\|_2^2+ \|(P_Nu-u^N)_t(t)\|^2 \right). \notag \\
\end{eqnarray}
Adding  the terms $\left(P_Nu-u^N,~(P_Nu-u^N)_{t}\right)$ and $\left((P_Nu-u^N)_{xx},~(P_Nu-u^N)_{xxt}\right)$ to both sides of the equation \eqref{inner3}
becomes
\begin{eqnarray}
&& \hspace*{-10pt}
\frac{1}{2} \min\{1, \eta_1, \eta_2\}   \frac{d}{dt}\Big[~
 \|(P_Nu-u^N)(t) \|_2^2 + \| (P_Nu-u^N)_t (t) \|_2^2 ~ \Big]  \notag \\
  && \hspace*{40pt}  \leq C \left( ~\| (u-P_Nu)(t) \|_2^2+\|(P_Nu-u^N)(t)\|_2^2+ \| (P_Nu-u^N)_t (t) \|_{2}^2 ~\right). \notag
\end{eqnarray}
Note that $\displaystyle \| (P_Nu-u^N)(0) \|_{2}=0$ and  $\displaystyle \| (P_Nu-u^N)_t(0) \|_{2}=0$. The Gronwall Lemma and Lemma \ref{lemma} imply that
\begin{eqnarray}
\| (P_Nu-u^N)(t) \|_{2}^2 + \| (P_Nu-u^N)_t (t) \|_{2}^2 & \leq &\int_0^t \| u(\tau)-P_Nu(\tau) \|_2^2~ e^{C(t-\tau)} d\tau\notag  \\
                       & \leq & C(T, \eta_1, \eta_2) ~ N^{4-2s} \int_0^t \| u(\tau)\|_s^2  ~d\tau \notag \\
                         \label{esz}\end{eqnarray}
for $s\geq 2$. Finally, we have
\begin{equation}
\| P_Nu-u^N \|_{X_2} \leq C(T, \eta_1, \eta_2)  N^{2-s}~ \| u\|_{X_s}. \label{estimation2}
\end{equation}

\noindent
Using \eqref{estimation1} and \eqref{estimation2} in \eqref{u-triangle}, we complete the proof of Theorem 2.4.
\end{proof}

\noindent
Note that the convergence theorem is proved  for the semi-discrete Fourier Galerkin approximation.
We point out that  the Fourier collocation  pseudo-spectral method is used in the following section as it is more practical
due to the use of  FFT.

\setcounter{equation}{0}
\section{The fully-discrete scheme}
We solve the HBq  equation by combining a Fourier pseudo-spectral method for the space component
and a fourth-order Runge Kutta scheme (RK4) for time.
If the spatial period $[-L,L]$ is, for convenience,
normalized to $[0,2\pi]$ using the transformation
\mbox{$X=\pi(x+L)/L$}, the equation \eqref{shbq1} becomes
\begin{equation}\label{hbq-zero-2pi}
u^N_{tt}-\left(\frac{\pi}{L}\right)^2u^N_{XX}-\eta_{1}\left(\frac{\pi}{L}\right)^2u^N_{XX\,t\,t}+\eta_{2}\left(\frac{\pi}{L}\right)^4u^N_{XXXX\,t\,t}=\left(\frac{\pi}{L}\right)^2(u^N)^{^p}_{XX}.
\end{equation}
The interval $[0,2\pi]$ is divided into $N$ equal subintervals with grid spacing
$\Delta X=2\pi/N$, where the integer $N$ is even. The spatial grid points are given by
$X_{j}=2\pi j/N$,  $j=0,1,2,...,N$. The approximate solutions to
$u^N(X_{j},t)$ is denoted by $U_{j}(t)$.
The discrete Fourier transform of the sequence
$\{U_{j} \}$, i.e.
\begin{equation}\label{dft}
  \widetilde{U}_{k}={\cal F}_{k}[U_{j}]=
          \frac{1}{N}\sum_{j=0}^{N-1}U_{j}\exp(-ikX_{j}),
           ~~~~-\frac{N}{2} \le k \le \frac{N}{2}-1~.
\end{equation}
gives the corresponding Fourier coefficients. Likewise, $\{U_{j} \}$ can
be recovered from the Fourier coefficients by the inversion formula
for the discrete Fourier transform (\ref{dft}), as follows:
\begin{equation}\label{invdft}
  U_{j}={\cal F}^{-1}_{j}[\widetilde{U}_{k}]=
          \sum_{k=-\frac{N}{2}}^{\frac{N}{2}-1}\widetilde{U}_{k}\exp(ikX_{j}),
          ~~~~j=0,1,2,...,N-1     ~.
\end{equation}
Here $\cal F$ denotes the discrete Fourier transform and
${\cal F}^{-1}$ its inverse. These transforms are  efficiently computed
using a fast Fourier transform (FFT) algorithm. In this study, we use  FFT
routines in Matlab (i.e. fft and ifft).

Applying the discrete Fourier transform to the equation \eqref{hbq-zero-2pi} we get the second order ordinary differential equation
which can be written in the following system
\begin{eqnarray}
&& (\widetilde{U}_k)_t=\widetilde{V}_k\label{hbq fourier1} \\
&& (\widetilde{V}_k)_t= -\frac{(\pi k/L)^2}{1+\eta_{1}(\pi k/L)^2+\eta_{2}(\pi k/L)^4}\,[\,\widetilde{U}_k+(\widetilde{U^p})_k\,]. \label{hbq fourier2}
\end{eqnarray}
In order to handle the nonlinear term we use a pseudo-spectral approximation. That is,  we use the formula  ${\cal F}_k [{(U_j)}^p]$
to compute the $k^{th}$ Fourier component of $u^p$. We use the fourth-order Runge-Kutta method to solve the resulting ODE system
\eqref{hbq fourier1}-\eqref{hbq fourier2} in time.
Finally, we find the approximate solution by using the inverse Fourier transform \eqref{invdft}.

\setcounter{equation}{0}

\section{Validation of the fully-discrete scheme}

The purpose of the present section is to verify
numerically  that {\em (i)} the proposed Fourier pseudo-spectral scheme is
highly accurate,
{\em (ii)} the scheme exhibits the fourth-order convergence in
time and {\em (iii)} the scheme has spectral accuracy in space. The $L_\infty$-error norm is defined as
\begin{equation}
L_{\infty}\mbox{-error}=\max_i |~u_i-U_i~|
\end{equation}

\noindent
where $u_i$ denotes the exact solution at $u(X_i,t)$.

\par
To validate whether the Fourier pseudo-spectral method exhibits the expected
convergence rates in time we perform some numerical experiments.
 In this section we use quadratic nonlinearity where $f(u)=u^2$.
 We take fixed number of spatial grid points $N$ and various values for the number of temporal grid  points $M$.  For $\eta_1=\eta_2=1$,
the initial data cor\-res\-ponding to the solitary wave solution (\ref{soliter}) for quadratic nonlinearity
become as follows:
\begin{eqnarray}
&& u(x,0)=A\, {\mbox{sech}}^4(B~x), \label{hbq-initial1}\\
&& v(x,0)=4\, A\,B\,c\,{\mbox{sech}}^4\left(B~x\right)\,
            \tanh\left( B~x\right). \label{hbq-initial2}
\end{eqnarray}
This initial data  generate a solitary wave initially at \mbox{$x_0=0$}
moving to the right with the amplitude $A \approx 0.39$, speed $c\approx 1.13$ and $B \approx 0.14$.
The problem is solved  for times up to \mbox{$T=5$}. The space interval is chosen as  $-100\le x\le 100$ to eliminate the error due to
boundary effects. In these experiments, we take \mbox{$N=512$} to ensure that
the error due to the spatial discretization is negligible.  The convergence rates  calculated from the $L_{\infty}$-errors
at the terminating time $T=5$ are shown in Table  I. The computed convergence rates agree well
with the fact that Fourier pseudo-spectral method exhibits the  fourth-order convergence in time.

\begin{center}
TABLE I
\end{center}
\vspace*{5pt}
\noindent {\small The convergence rate in time calculated from the
$L_{\infty}$-errors  in the case of single solitary wave
($A \approx 0.39,~~N=512$). }
\begin{center}
\begin{tabular}{|c|cc|} \hline\hline
\hline
$M$&  $L_{\infty}$-error & Order \\
\hline
2     &  8.662E-3   &   - \\
5     &  2.530E-4   &  3.8561  \\
10    &  1.614E-5   &  3.9704 \\
50    &  2.623E-8   &  3.9903  \\
100   &  1.637E-9   &  4.0021  \\
\hline\hline
\end{tabular}
\end{center}
\par To validate whether  the Fourier pseudo-spectral method exhibits the expected
convergence rate in space we perform some further numerical
experiments for various values of $N$ and a fixed value of $M$.
In these experiments we take $M=1000$ to minimize  the temporal errors.  We  present the $L_{\infty}$-errors for the terminating time $T=5$
together with the observed rates of convergence in Table II. These results show that the numerical solution obtained using the
Fourier pseudo-spectral scheme converges rapidly to the accurate solution in space, which is an
indicative of exponential convergence.

\vspace*{10pt}
\begin{center}
TABLE II
\end{center}
\vspace*{5pt}
\noindent {\small The convergence rates in space calculated from
the $L_{\infty}$-errors in the case of the single solitary wave
($A \approx 0.39,~~ M=1000$). }
\begin{center}
\begin{tabular}{|c|lc|} \hline\hline
$N$     & \hspace*{10pt}$L_{\infty}$-error \hspace*{5pt} & \hspace*{5pt} Order   \\
\hline
  10    &    0.211E-1     & ~ -      \\
  50    &    1.747E-3     & ~1.5480  \\
  100   &    4.431E-7     & ~11.9450 \\
  150   &    6.500E-10    & ~16.0916 \\
  200   &    3.884E-13    & ~25.8017 \\
\hline\hline
\end{tabular}
\end{center}
\noindent

\setcounter{equation}{0}
\section{Numerical Experiments}

In this section, we investigate the effect of extra dispersion  term  and the nonlinear term on the solutions of the HBq equation. For this aim, we will consider propagation of a single solitary wave, head-on collision of two solitary waves and blow-up solution.

\subsection{Propagation of a Single Solitary Wave}

First, we study the effect of the nonlinear term on the single solitary wave  solution.
The problem is solved on the space interval \mbox{$-100\le x\le 100$} for times up to \mbox{$T=5$} by using the initial data cor\-res\-ponding to the solitary wave solution \eqref{soliter}. We show the variation of $L_\infty$-errors with $N$ for the HBq equation for  various powers of nonlinearity, namely, $f(u)=u^p$ for $p=2,3,4,5$ in Figure 1. The value of $M$ is chosen to satisfy $\nu=\Delta t / \Delta x=2.56\times 10^{-3}$. We observe that the $L_\infty$-errors decay as the number of grid points increases for various degrees of nonlinearity. Even in the case of the quintic nonlinearity, the $L_\infty$-errors  are  about $10^{-12}$.  We have not seen any numerical results
for the HBq equation in the literature to compare with our  results. This  experiment shows that the proposed method provides  highly accurate numerical results  even for the higher-order nonlinearities.
\begin{figure}[h!]
\includegraphics[width=5.3in]{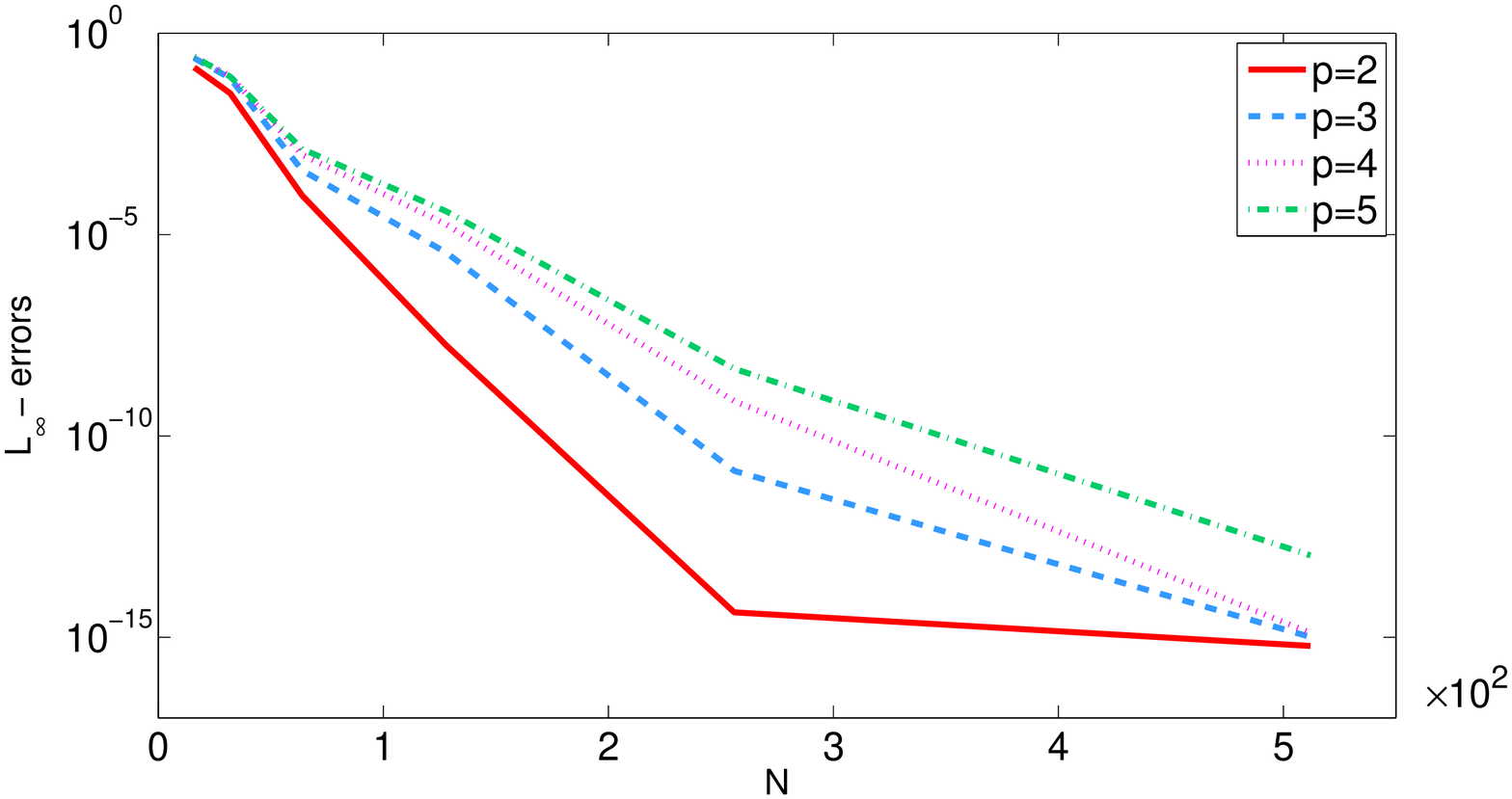}
\caption{ \small{$L_\infty$-errors for the increasing values of $N$. }}
\end{figure}

\par
The question arises naturally how the solutions of the HBq equation
behave when the coefficient of  extra dispersion  term  $\eta_2\rightarrow 0^+$. For this aim, we perform some numerical tests for various values of $\eta_2$
and the fixed value $\eta_1=1$ for quadratic nonlinearity.  We use  the initial data
\begin{eqnarray}
&& u(x,0)=A\, {\mbox{sech}}^2(\frac{1}{c}\sqrt{\frac{A}{6}}~x),  \label{ibq-initial1}\\
&& v(x,0)=2\, A\,{\mbox{sech}}^2\left( \frac{1}{c}\sqrt{\frac{A}{6}}~x\right)\,
            \tanh\left( \frac{1}{c}\sqrt{\frac{A}{6}}~x\right) \label{ibq-initial2}
\end{eqnarray}
where $A=0.9$ and $c=\sqrt{\displaystyle\frac{2A}{3}+1}$. This initial data correspond to the initial profile of the exact solitary wave solution for  the IBq equation given in \cite{borluk}.
 In Figure 2, the solid line shows the exact solution of the
improved Boussinesq equation  which actually corresponds to the HBq equation with $\eta_2=0$.
The dashed lines show the numerical solution of the HBq equation
for the values of $\eta_2=10$, $\eta_2=5$, $\eta_2=1$ and  $\eta_2=0.1$ and $\eta_1=1$.
The problem is solved on the space interval $-100\le x\le 100$ for times up to \mbox{$T=5$} where $N=512$ and $M=5000$. To observe the difference of two solutions more clear, this figure is presented on the space interval $-30\le x\le 30$. The numerical tests indicate that, as the parameter $\eta_2$ tends to zero, the  solutions of the HBq equation converge to the solitary wave solution of the IBq equation.
\newpage
\begin{figure}[h!bt]
\begin{minipage}[t]{0.44\linewidth}
\includegraphics[width=2.5in]{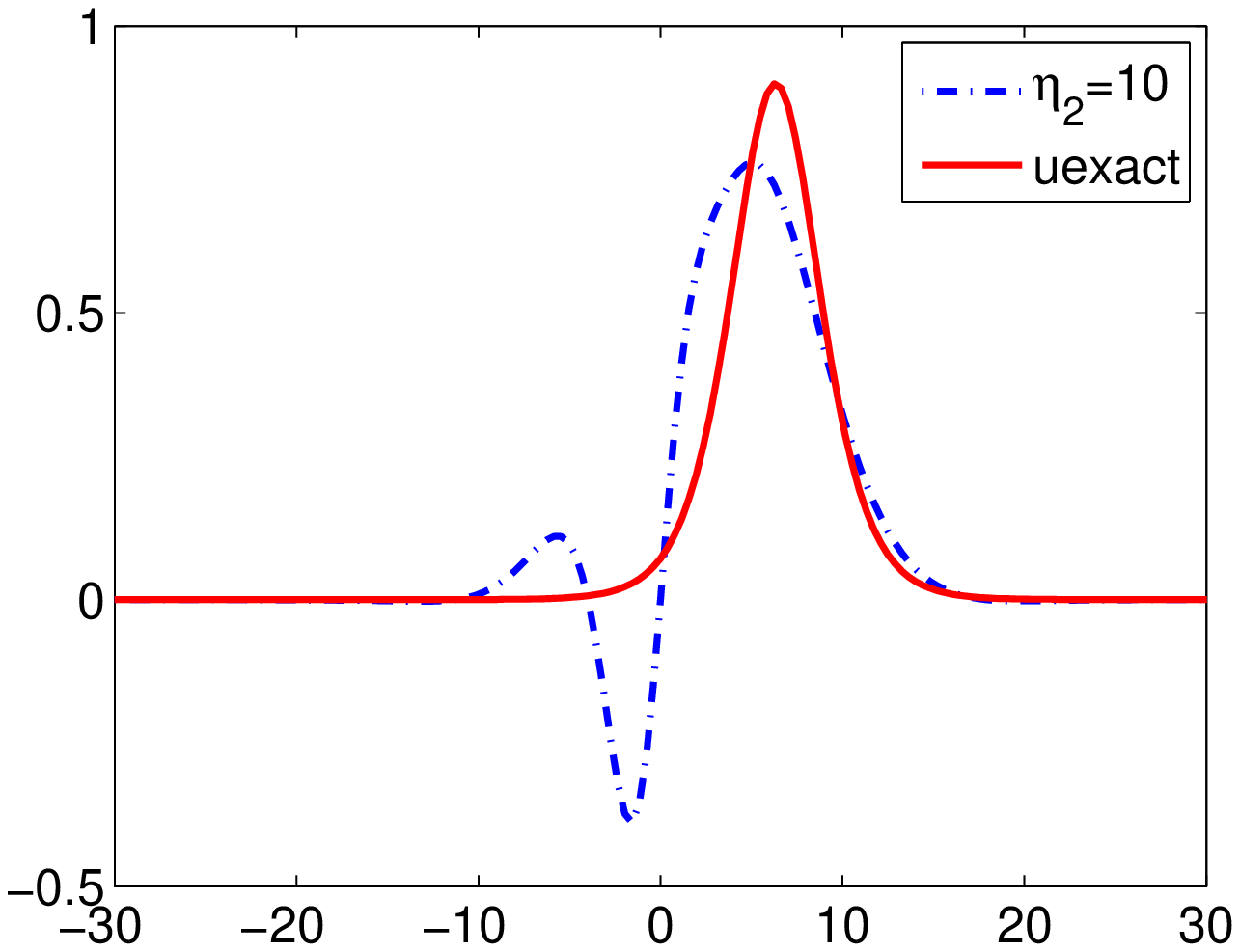}
\end{minipage}
\hspace*{30pt}
\begin{minipage}[t]{0.44\linewidth}
\hspace*{-5pt}
\includegraphics[width=2.5in]{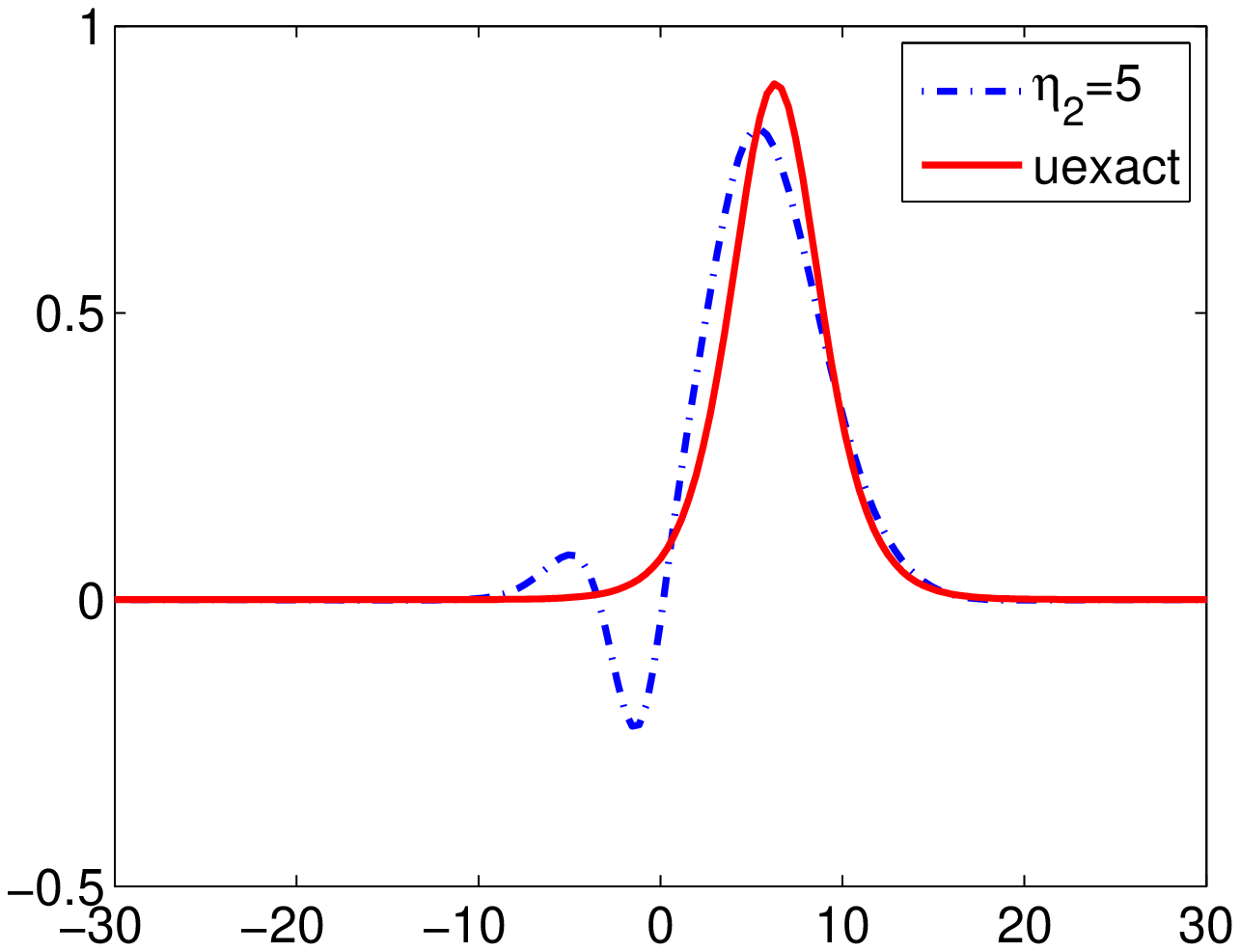}
\end{minipage}
\end{figure}
\begin{figure}[h!bt]

\begin{minipage}[t]{0.44\linewidth}
\includegraphics[width=2.5in]{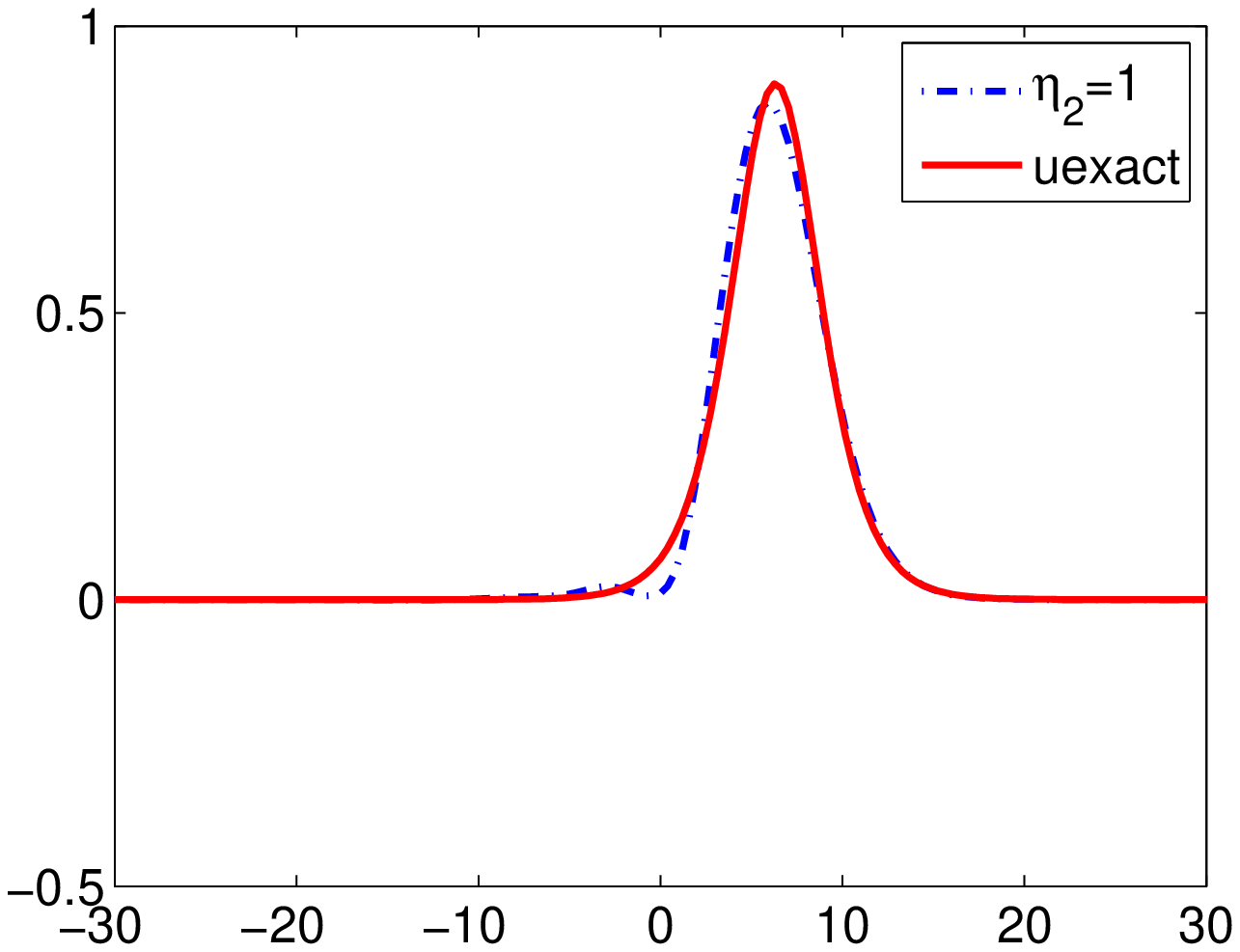}
\end{minipage}
\hspace*{30pt}
\begin{minipage}[t]{0.44\linewidth}
\hspace*{-5pt}
\includegraphics[width=2.5in]{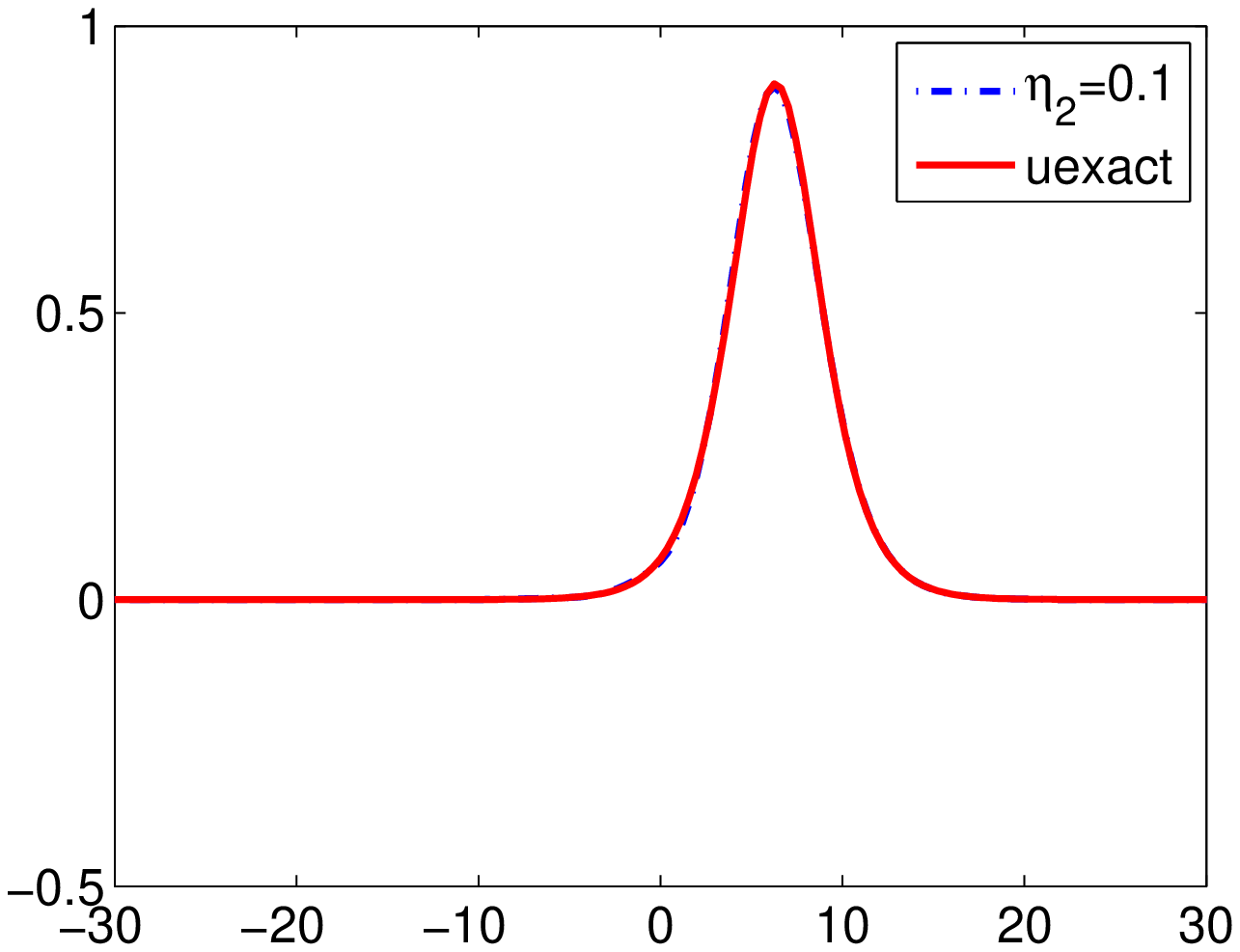}
\end{minipage}
\caption{\small{ The numerical solution of the HBq equation
for  {$\eta_2=10$, $\eta_2=5$, $\eta_2=1$, $\eta_2=0.1$ with $\eta_1=1$ and  the exact solitary wave solution of the IBq equation}}}
\end{figure}
\begin{center}
TABLE III
\end{center}
\vspace*{5pt}
 {\small The amplitude at the final time $T=5$ with respect to changing $\eta_2$. }
\begin{center}
\begin{tabular}{|c|cc|} \hline\hline
\hline
$\eta_2$&   Amplitude  ($p=2$) & Amplitude ($p=3$)  \\
\hline
0.1   &  0.894  &	 0.943\\
0.3    &  0.885 &			0.930 \\
0.5  &  0.876		 &	 0.919\\
0.8   &  0.870		&	 0.912  \\
1     &  0.866 &			0.908 \\
\hline\hline
\end{tabular}
\end{center}

\noindent
Table III shows the dependence of the amplitude to a changing $\eta_2$. In these experiments, we use the initial data \eqref{ibq-initial1}-\eqref{ibq-initial2} with $N=512$ and $M=5000$.
We note that initial amplitude is $0.9$. From Table III, we can draw two observations:

\noindent
{\em (i)} The amplitude of the wave is decreasing with  increasing  values of  $\eta_2$.
This numerical result agrees well with the fact that the wave spreads out with increasing dispersive
effects.

\noindent
{\em (ii)} The amplitude is increasing with increasing values of power of nonlinearity $p$. This numerical result is also agrees well with the  fact that the wave  steepens with increasing nonlinear effects.

\subsection{Head-on Collision of Two Solitary Waves}
\par
In this section, we consider the HBq equation with quadratic nonlinearity
and  we study the head-on collision of two solitary waves with equal amplitudes.
The initial conditions are given by
\begin{eqnarray}
&& \hspace*{-20pt} u(x,0)=\sum_{i=1}^2 A{\mbox{sech}}^4(B(x-x_0^{i})) ,\nonumber \\
&& \hspace*{-20pt} v(x,0)=4\sum_{i=1}^2 ABc_{i} {\mbox{sech}}^4\left(B(x-x_0^{i})\right)\,
            \tanh\left(B(x-x_0^{i})\right). \nonumber
\end{eqnarray}

\noindent
We consider two solitary waves, one initially located at
$x_{0}^1=-40$ and moving to the right with amplitude $A$ ($c_1>0$) and  one initially located at
$x_{0}^2=40$ and moving to the left with amplitude $A$ ($c_2=-c_1$).  The problem is solved again
on the interval $-100\le x\le 100~$ for times up to $T=72$ using the Fourier
pseudo-spectral method. All experiments in this section are performed for $N=512$ and
$M=7200$. We first illustrate the head-on collision of two solitary waves with equal amplitudes
$A \approx 0.39$ with $\eta_1=\eta_2=1$ and  $A \approx 1.08$  with $\eta_1=\eta_2=2$ in Figure 3.

\begin{figure}[h!bt]
\begin{minipage}[t]{0.44\linewidth}
\hspace*{-55pt}
\includegraphics[width=3.4in]{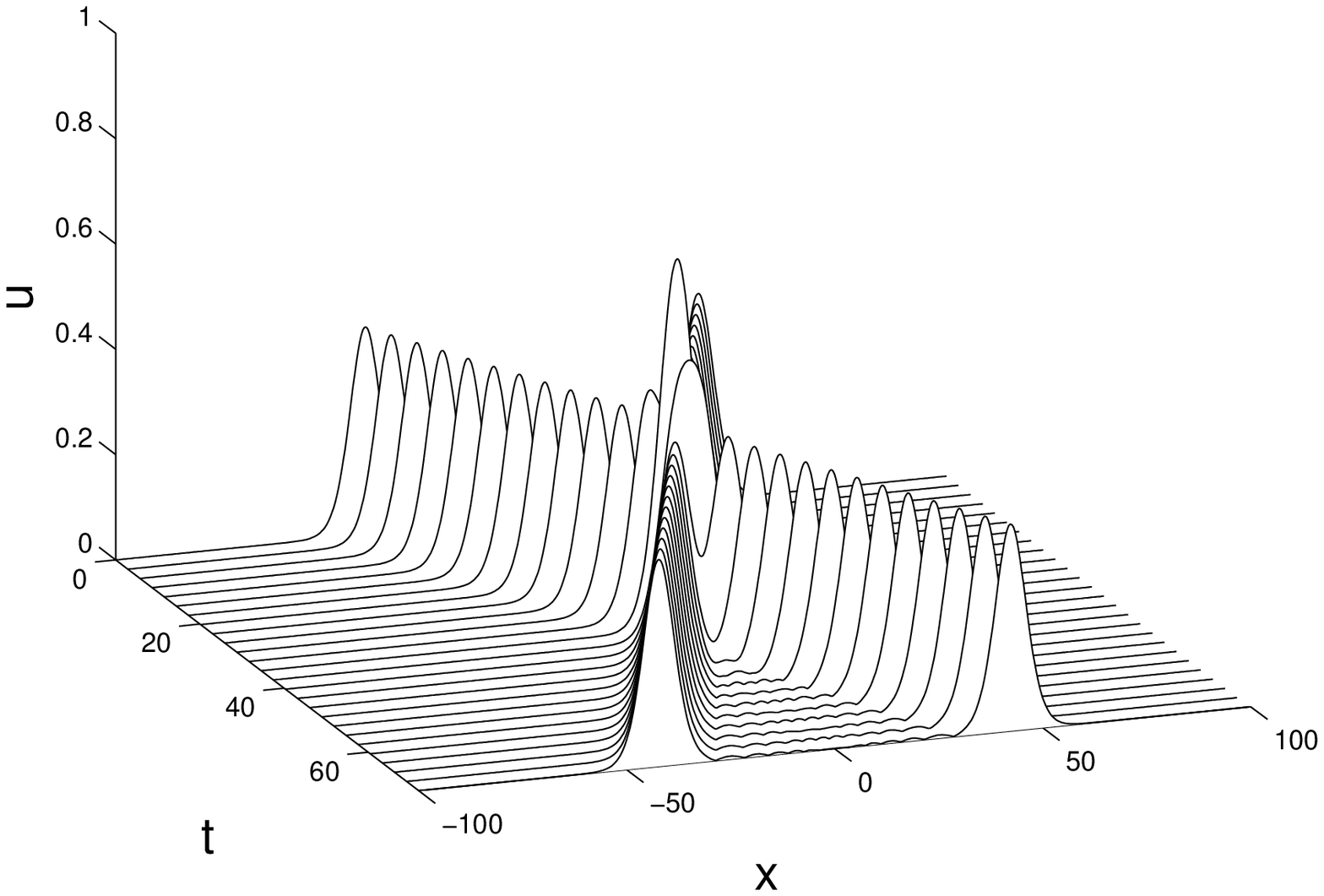}
\end{minipage}
\begin{minipage}[t]{0.44\linewidth}
\hspace*{15pt}
\includegraphics[width=3.4in]{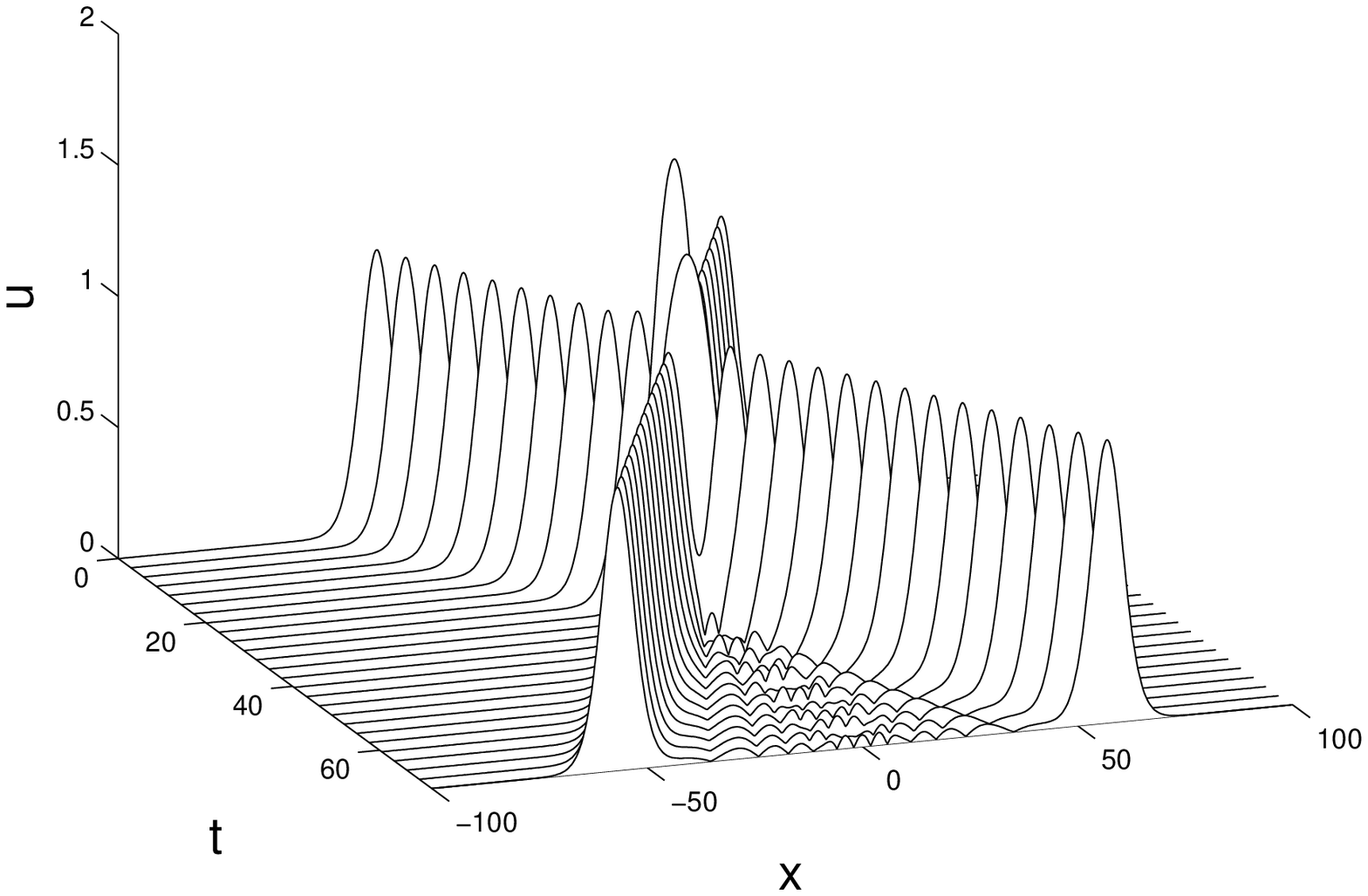}
\end{minipage}
\caption{\small{Surface plot of the head-on collision of two solitary waves with
equal amplitudes. $A \approx 0.39$ (left panel) and $A \approx 1.08$ (right panel).}}
\end{figure}

\par
As  the HBq equation cannot be solved by the inverse scattering method,  the interaction of solitary
waves are inelastic. Although secondary waves exist in all nonlinear interactions, they become more visible as we increase the amplitudes of the interacting waves.

\par
Since an analytical solution is not available for the collision of two solitary waves, we cannot present
the $L_\infty$-errors for this experiment. But, as a numerical check of the proposed Fourier pseudo-spectral scheme, we have observed the evolution of the  change  in  the conserved quantity  $I_1$.
The error $\mid I_1(t)-I_1(0) \mid$ is approximately of order $10^{-14}$ and $10^{-10}$ for the  amplitudes $A \approx 0.39$  and $A \approx1.08$ up to the final time $T=72$, respectively. This behavior provides a valuable check on the numerical results.

\subsection{Blow-up Solution}
In the last section, we investigate the effect of extra dispersion  term  and the nonlinear term on the blow-up solutions of the HBq equation.
The HBq equation is one of the class
of nonlocal equation, we refer to Theorem $5.2$ in \cite{duruk1} as the blow-up criteria. Applied to the HBq equation, this theorem can be restated
as:
\begin{theorem}
    Suppose $\phi=\Phi_x,  \psi=\Psi_x$ for some $\Phi, \Psi \in H^2(\Omega)$ and $F(\varphi) \in L^1(\Omega)$.  If there are
     some
    $\mu >0$ such that
    \begin{equation}
    uf\left( u\right) \leq 2\mu u^2+ 2 \left(1+2\mu \right) F\left( u\right) \mbox{ for all }u\in \mathbb{R}, \label{eszmu}
    \end{equation}
    and
    ${I_3}\left( 0\right) <0~$,    then the solution $u$ of the Cauchy problem for HBq equation with
    the initial conditions $u(x,0)=\phi(x)$, $u_t(x,0)=\psi(x)$ blows up in finite time.
\end{theorem}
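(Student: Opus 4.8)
\noindent\emph{Sketch of the intended proof.} The plan is to use the classical concavity argument (in the spirit of Levine), powered by the conserved energy $I_3$. I would first pass to the potential variable: writing $u=U_x$ with $U(\cdot,0)=\Phi$, $U_t(\cdot,0)=\Psi\in H^2(\Omega)$ and integrating \eqref{hbq1} once in $x$ (the constant of integration vanishes under the assumed boundary conditions), the HBq equation becomes
\[
\mathcal{L}U_{tt}=U_{xx}+\big(f(U_x)\big)_x ,\qquad \mathcal{L}:=I-\eta_1\partial_x^2+\eta_2\partial_x^4 ,
\]
where $\mathcal{L}$ is positive and self-adjoint with $(\mathcal{L}U,U)=\|U\|^2+\eta_1\|U_x\|^2+\eta_2\|U_{xx}\|^2$. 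I would then study
\[
G(t):=(\mathcal{L}U(t),U(t)),
\]
which is equivalent to $\|U(t)\|_2^{2}$, so that showing $G(t)\to\infty$ at a finite time establishes the claimed blow-up in the energy space.

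\smallskip\noindent
Next I would differentiate: $G'=2(\mathcal{L}U,U_t)$ and, using self-adjointness of $\mathcal{L}$, the equation above and integration by parts,
\[
G''=2(\mathcal{L}U_t,U_t)-2\|U_x\|^2-2\big(U_x,f(U_x)\big).
\]
The key manipulation is to invoke the energy identity $I_3(t)=(\mathcal{L}U_t,U_t)+2\big(F(U_x),1\big)+\|U_x\|^2\equiv I_3(0)$ in order to trade part of $(\mathcal{L}U_t,U_t)$ for $I_3(0)$ and for $2(F(U_x),1)+\|U_x\|^2$, and then to feed in the structural hypothesis \eqref{eszmu}; the constants $2\mu$ and $2(1+2\mu)$ there are calibrated exactly so that all the remaining $\|U_x\|^2$ and $(F(U_x),1)$ terms cancel, leaving
\[
G''\ \ge\ 4(1+\mu)(\mathcal{L}U_t,U_t)-(2+4\mu)\,I_3(0).
\]
Since $I_3(0)<0$ and $\mathcal{L}\ge0$, the right-hand side is strictly positive; in particular $G''$ is bounded below by the positive constant $-(2+4\mu)I_3(0)$, so $G'(t)\to+\infty$.

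\smallskip\noindent
The final step is to convert this into blow-up. By the Cauchy--Schwarz inequality for the inner product $(\mathcal{L}\cdot,\cdot)$ one has $(G')^2=4(\mathcal{L}U,U_t)^2\le4(\mathcal{L}U,U)(\mathcal{L}U_t,U_t)=4\,G\,(\mathcal{L}U_t,U_t)$, and combining this with the bound on $G''$ yields
\[
G(t)G''(t)-(1+\mu)\big(G'(t)\big)^2>0\qquad\text{whenever }G(t)>0,
\]
i.e.\ $\big(G^{-\mu}\big)''<0$, so $G^{-\mu}$ is concave. Choosing $t_1$ with $G'(t_1)>0$ (possible because $G''\ge-(2+4\mu)I_3(0)>0$), the function $G^{-\mu}$ would be positive, concave and strictly decreasing on $[t_1,\infty)$, which is impossible on a half-line; hence $G$ becomes infinite at some finite $T^*\le t_1+G(t_1)/\big(\mu G'(t_1)\big)$, and the solution must leave $H^2(\Omega)$, i.e.\ blow up, in finite time. (If already $G'(0)>0$ one may take $t_1=0$, obtaining $T^*\le G(0)/(\mu G'(0))$.)

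\smallskip\noindent
The part I expect to be most delicate is the bookkeeping in the middle step: the expression for $G''$, the energy identity and \eqref{eszmu} must be combined in precisely the right order so that every ``potential'' contribution is eliminated and only a nonnegative multiple of the kinetic term $(\mathcal{L}U_t,U_t)$ together with the strictly positive constant $-(2+4\mu)I_3(0)$ survives --- this is exactly what fixes the exponent $\mu$ of the concavity functional $G^{-\mu}$ and explains the precise constants appearing in \eqref{eszmu}. The remaining points --- justifying the term-by-term differentiation of $G(t)$ and the continuation/blow-up alternative in $H^2(\Omega)$ --- are routine given the local well-posedness theory for the HBq equation in \cite{duruk2}. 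Since the statement is quoted from Theorem~5.2 of \cite{duruk1}, an alternative is simply to verify that the HBq equation fits that nonlocal framework (its Fourier multiplier being $\big(1+\eta_1\xi^2+\eta_2\xi^4\big)^{-1}$); but carrying out the concavity argument directly, as sketched above, is equally short.
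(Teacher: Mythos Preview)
Your sketch is correct: the expression for $G''$, the substitution of the energy identity, the use of hypothesis \eqref{eszmu}, and the resulting inequality $G''\ge 4(1+\mu)(\mathcal{L}U_t,U_t)-(2+4\mu)I_3(0)$ all check out, and the Levine concavity step $(G^{-\mu})''<0$ follows exactly as you indicate via Cauchy--Schwarz in the $(\mathcal{L}\cdot,\cdot)$ inner product.

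The paper, however, does not prove this theorem at all. It simply records the statement as a specialization of Theorem~5.2 of \cite{duruk1} to the HBq equation (whose symbol $(1+\eta_1\xi^2+\eta_2\xi^4)^{-1}$ places it within that nonlocal framework) and then moves straight to the numerical blow-up experiments. So your route is genuinely different in that you supply a direct, self-contained argument where the paper relies on a citation. What your approach buys is transparency---the reader sees exactly why the constants in \eqref{eszmu} are what they are and how $I_3(0)<0$ drives the blow-up---at the cost of the bookkeeping you mention (justifying the differentiations and invoking the local theory of \cite{duruk2}). You already anticipate the paper's shortcut in your final paragraph, so there is nothing to correct; if you want to align with the paper, a one-line reference to \cite{duruk1} suffices.
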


\noindent
First, we consider the HBq equation with both quadratic nonlinearity \mbox{$f(u)=u^2$} and cubic nonlinearity \mbox{$f(u)=-u^3$}.  We set the parameters $\eta_1=\eta_2=1$ in \eqref{hbq1}.
For the quadratic nonlinearity, if we choose the
initial data
\begin{equation}
 \phi(x)=4\left(\frac{2x^{2}}{3}-1\right) e^{-\frac{~x^{2}}{3}} , \hspace*{30pt} \psi(x)=\left(x^{2}-1\right)e^{-\frac{~x^{2}}{2}}
 \label{inamp}
\end{equation}
as in \cite{godefroy},  and the condition \eqref{eszmu} is satisfied for $\mu=\frac{1}{4}$.
Similarly, for the cubic nonlinearity, the initial data are chosen as
\begin{equation}
\phi(x)=13 \left(\frac{x^{2}}{2}-1\right)e^{-\frac{~ x^{2}}{4}}, \hspace*{30pt}
\psi(x)=\left(1-x^{2}\right)e^{-\frac{~x^{2}}{2}} \label{inamp2}
\end{equation}
where  the inequality \eqref{eszmu} is  satisfied for $\mu=\frac{1}{2}$. We note that ${I_3}(0)<0$ for both cases. For the experiments in this section, the problem is solved on the interval $-10 \leq x \leq 10$  up to final time $T=4$ for quadratic nonlinearity and  $T=0.4$ for cubic nonlinearity.

\begin{figure}
\begin{minipage}[t]{0.44\linewidth}
\hspace*{-82pt}
\includegraphics[width=4in]{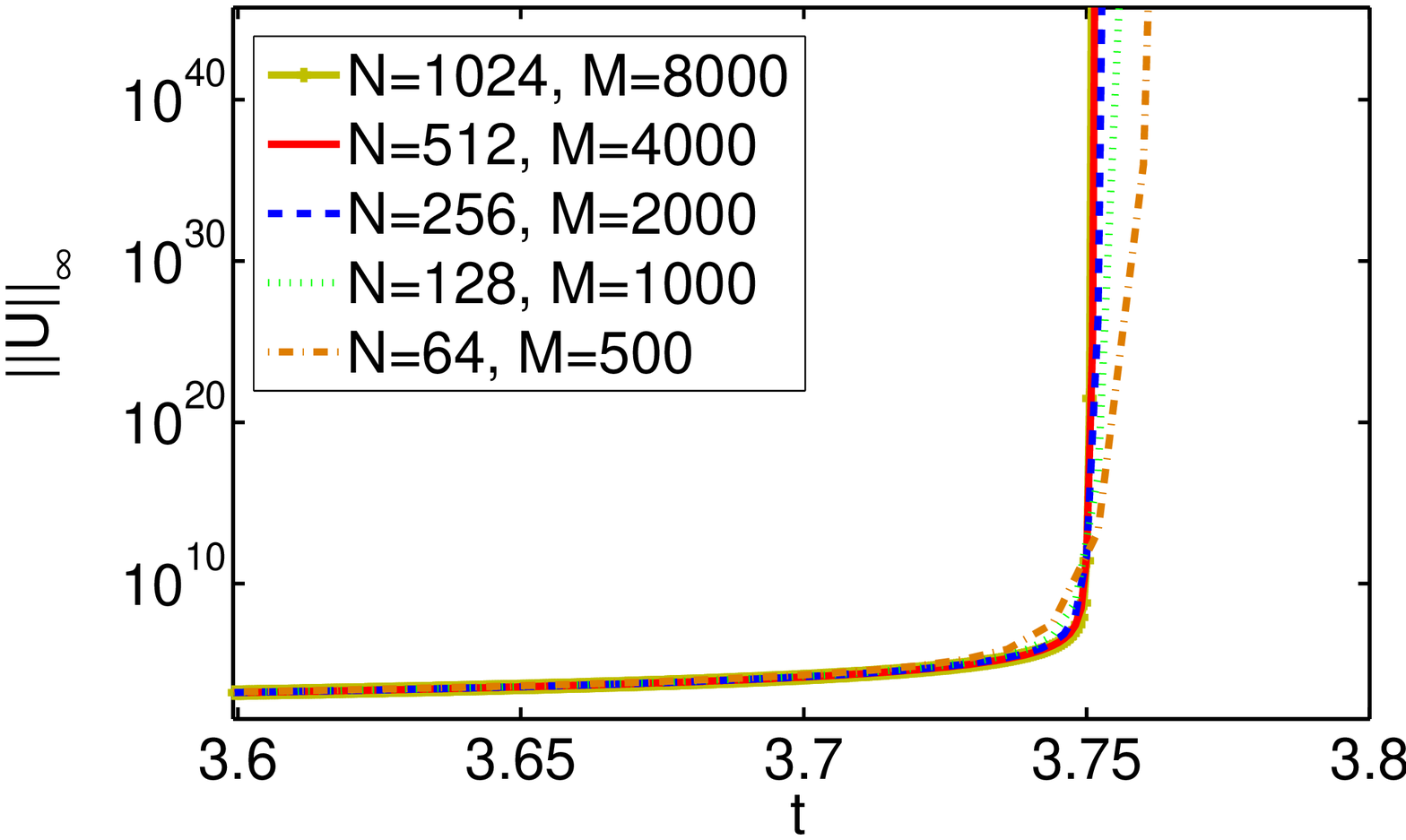}
\end{minipage}
\hspace*{-10pt}
\begin{minipage}[t]{0.44\linewidth}
\includegraphics[width=4in]{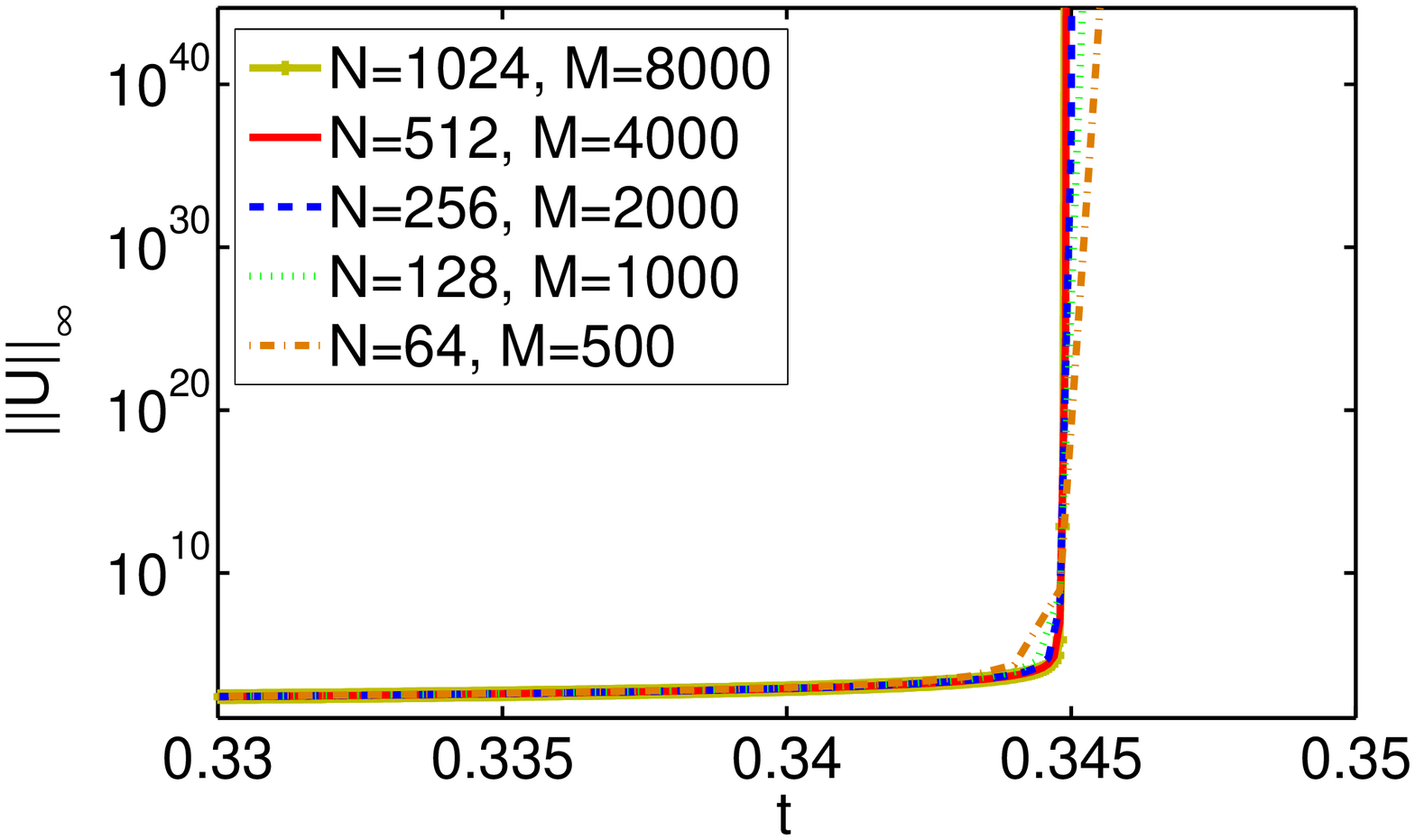}
\end{minipage}

\vspace*{-30pt}
\caption{\small{The variation of $\|U\|_\infty$ with time for increasingly refined discretization
for quadratic nonlinearity (left panel) and  cubic nonlinearity (right panel).}}
\end{figure}

\par
In order to check whether the numerical blow-up results are affected by the time and space step sizes, we now make some numerical experiments with varying resolutions.
We start with the spatial and  temporal resolutions corresponding to $N=64$ and $M=500$, respectively. Then we refine the mesh by increasing the number of  both spatial and temporal grid points. We present  the variation of the $L_\infty$-norm of the approximate solution obtained using the Fourier pseudo-spectral scheme  for various mesh in Figure 4. To observe the blow-up profile more clear, figures  are illustrated near the blow-up time. It is seen that curves  converge to a limiting blow-up profile as the resolution is increased. For  the number of  spatial and temporal grid points more than $N=512$ and $M=4000$, the blow-up profile is not distinguishable from each other. Therefore, we set $N=512$ and $M=4000$ for the rest of the study.
The numerical results strongly indicate that a blow-up  is well underway by time $t=3.8$  for quadratic nonlinearity and $t=0.35$ for cubic nonlinearity. {In Figure 5, we also illustrate the blow-up profile near the blow-up time  at $t=3.710$, $t=3.711$, $t=3.7115$ and $t=3.712$ for the quadratic nonlinearity as in \cite{kalisch}. We observe that the $L_\infty$-norm of the blow-up solution increases very rapidly while  the  blow-up profile remains the same. Therefore, the blow-up appears to be a similarity type.}

\begin{figure}[h!bt]
\begin{minipage}[t]{0.44\linewidth}
\includegraphics[width=2.5in]{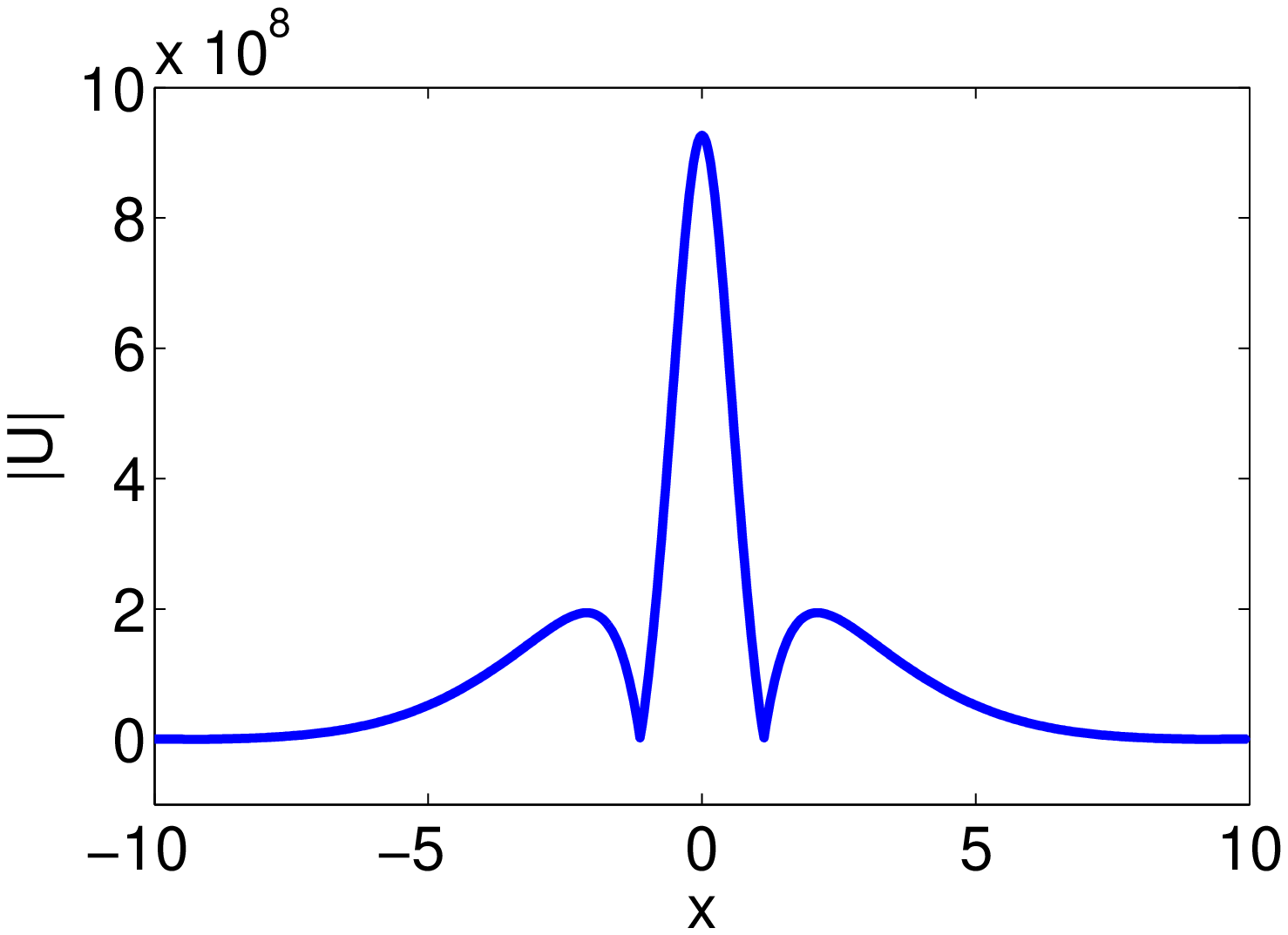}
\end{minipage}
\hspace*{30pt}
\begin{minipage}[t]{0.44\linewidth}
\hspace*{-5pt}
\includegraphics[width=2.5in]{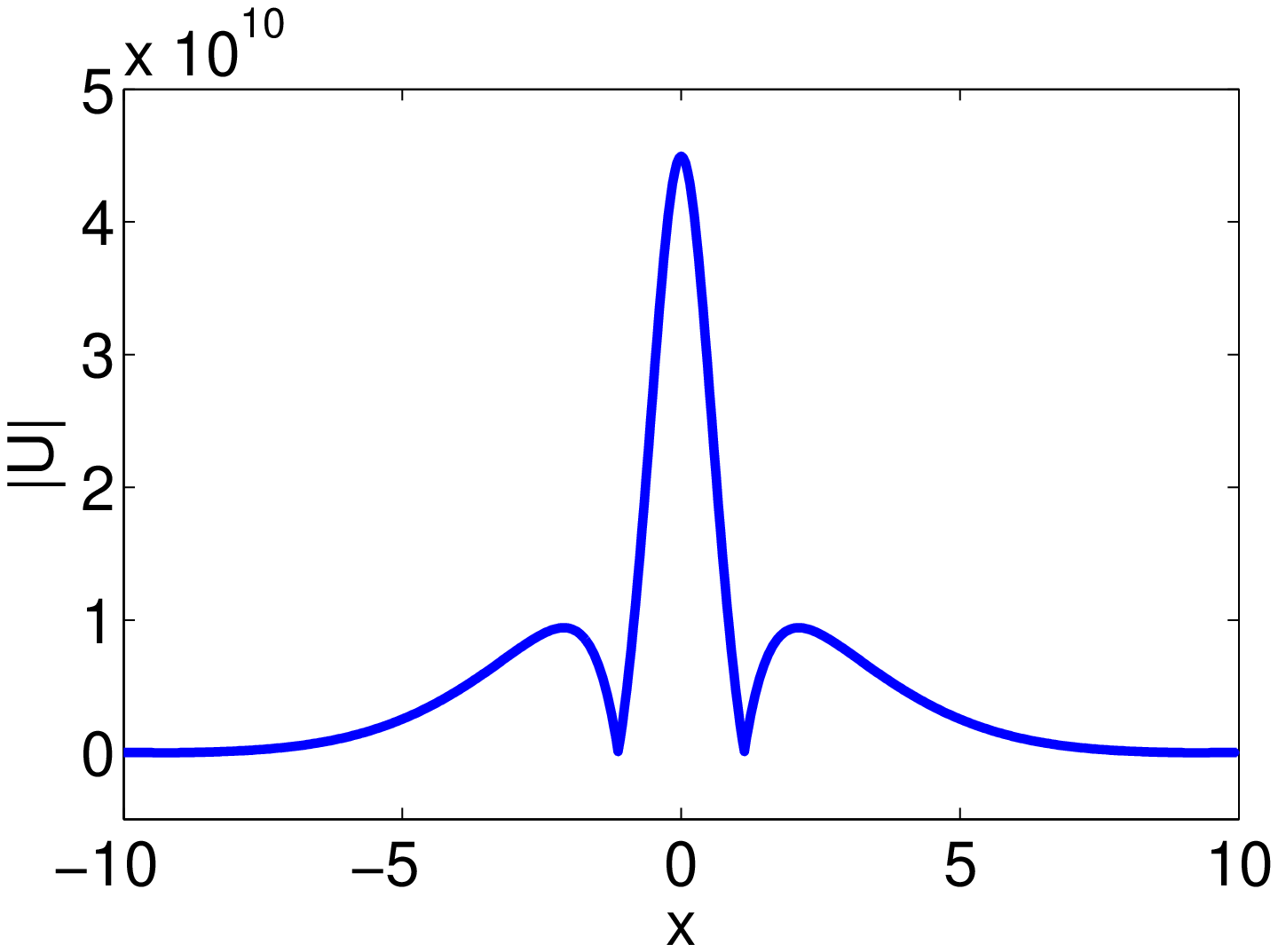}
\end{minipage}
\end{figure}
\begin{figure}[h!bt]

\begin{minipage}[t]{0.44\linewidth}
\includegraphics[width=2.5in]{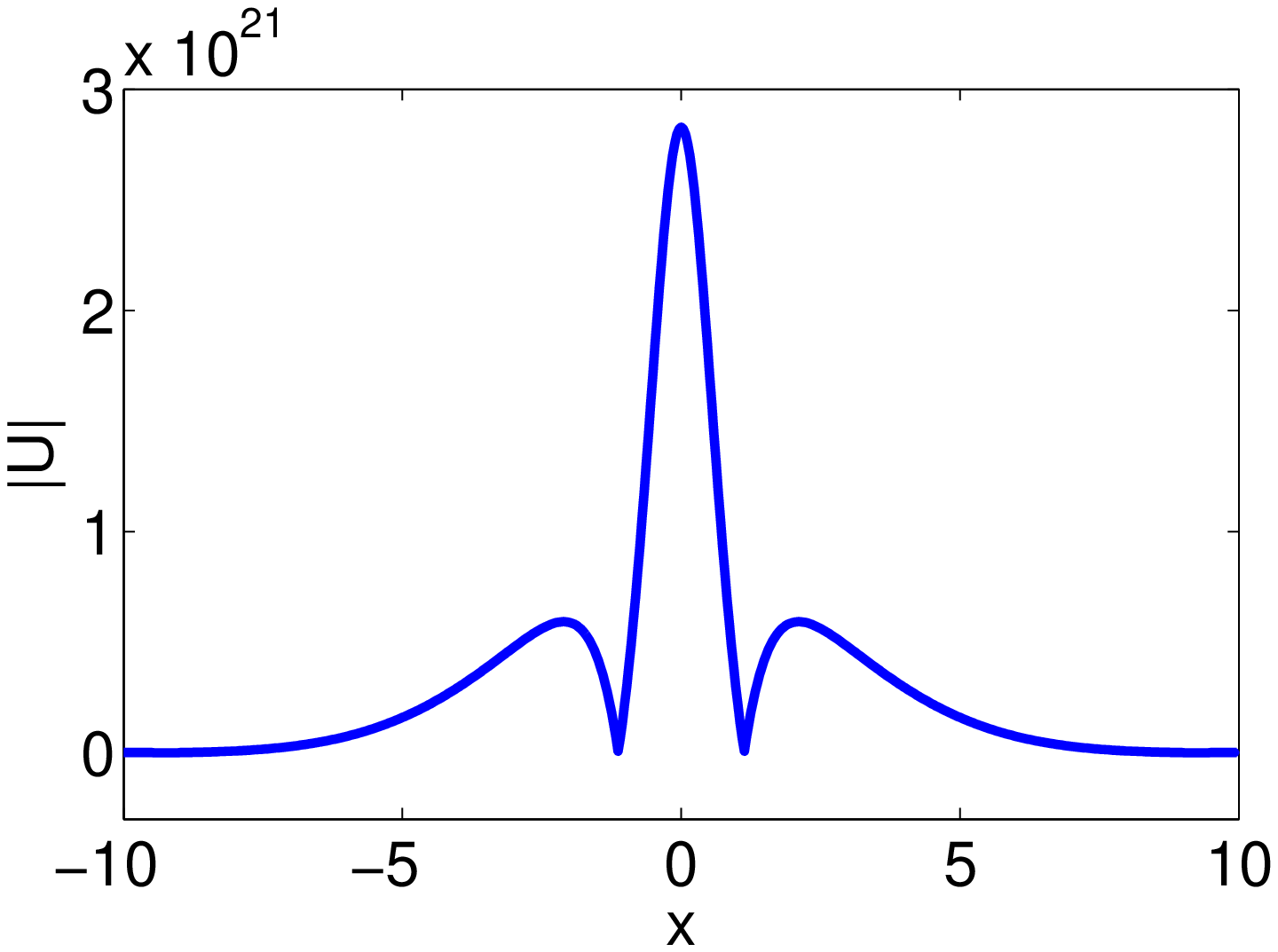}
\end{minipage}
\hspace*{30pt}
\begin{minipage}[t]{0.44\linewidth}
\hspace*{-5pt}
\includegraphics[width=2.5in]{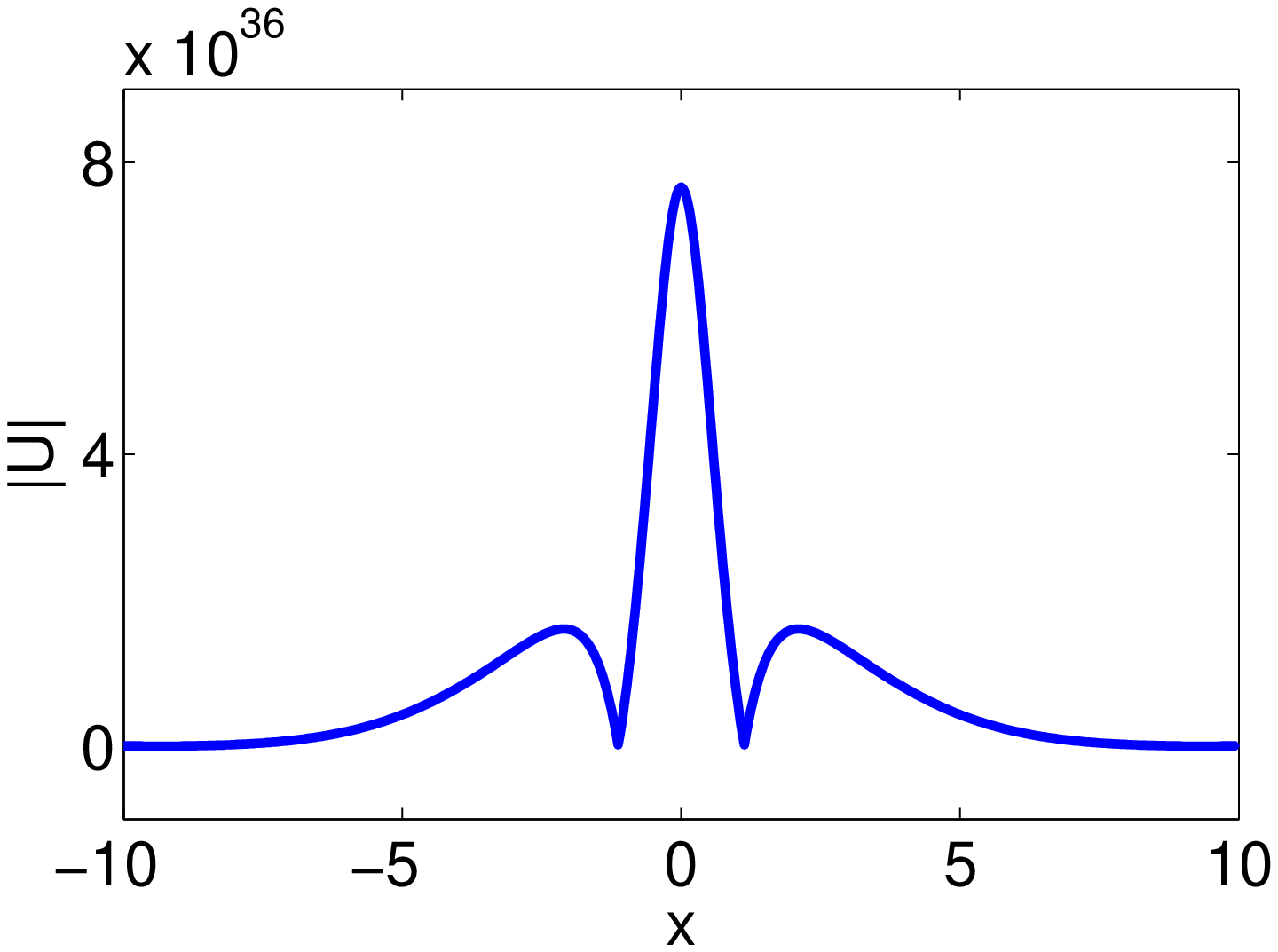}
\end{minipage}
\caption{\small{ The blow-up profile of the HBq equation near the blow-up time
at {$t=3.710$, $t=3.711$, $t=3.7115$, $t=3.712$ for quadratic nonlinearity with $\eta_1=\eta_2=1$. }}}
\end{figure}

\par
To investigate how the blow-up time depends on the coefficient of extra dispersion  term $\eta_2$ with the fixed value $\eta_1=1$, we perform some numerical experiments. The variation of the blow-up time as $\eta_2\rightarrow 0^+$ is presented in Figure 6. As it is seen from the figure,  less dispersion gives rise to earlier blow-up time.
\begin{figure}
\begin{minipage}[t]{0.44\linewidth}
\hspace*{-80pt}
\includegraphics[width=4in]{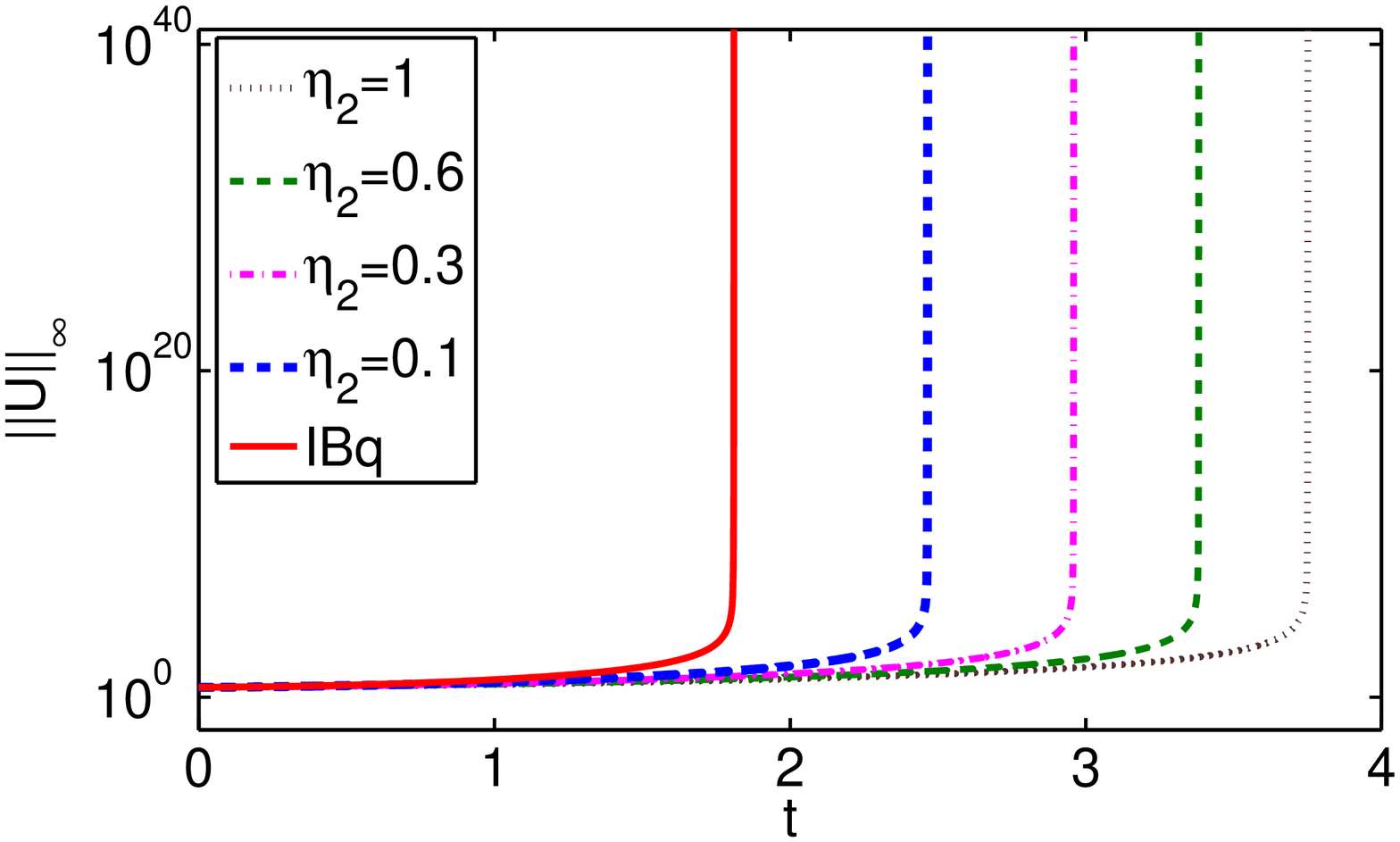}
\end{minipage}
\hspace*{-13pt}
\begin{minipage}[t]{0.44\linewidth}
\includegraphics[width=4.1in]{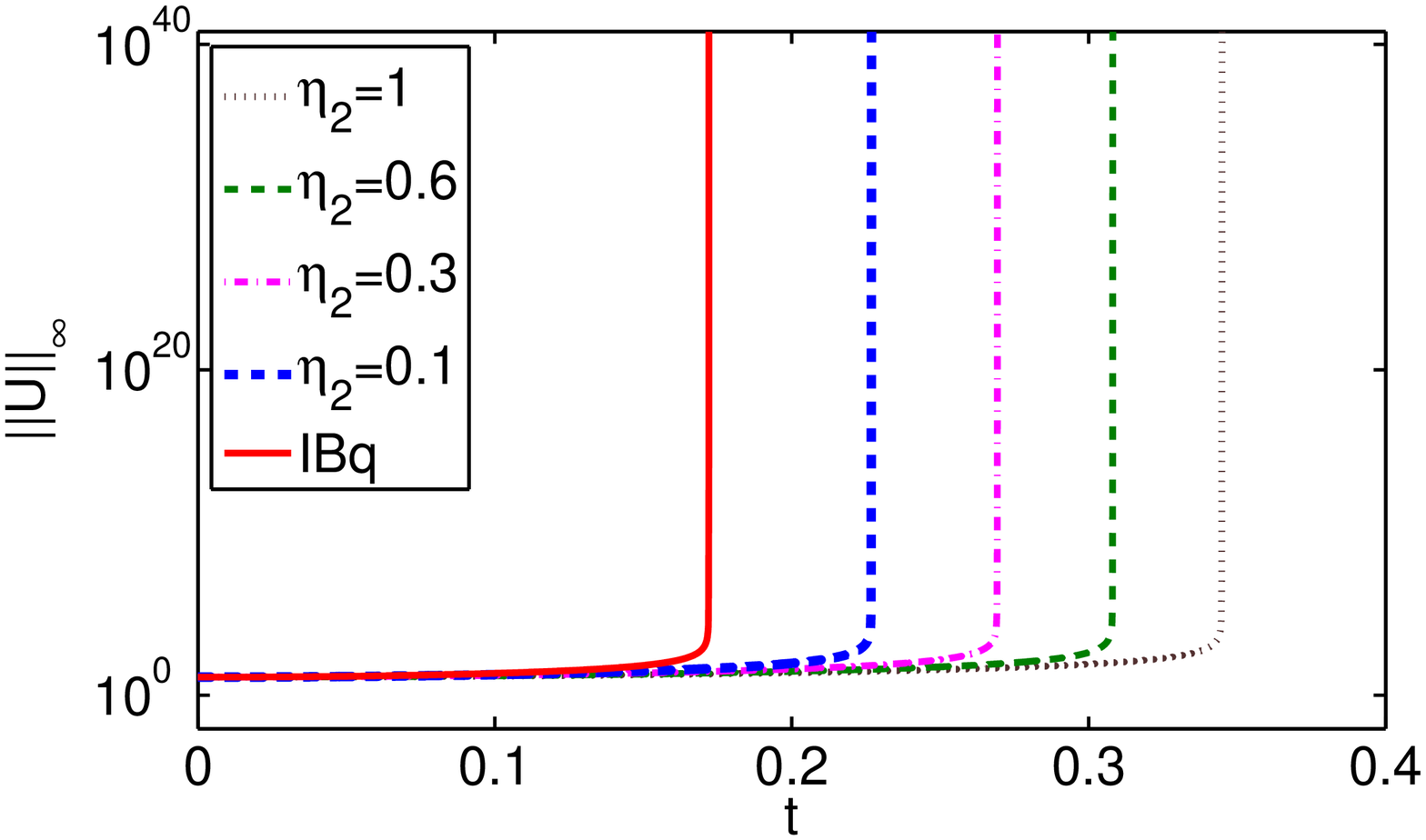}
\end{minipage}

\vspace*{-30pt}
\caption{\small{ The variation of the blow-up time as $\eta_2\rightarrow 0^+$ for quadratic nonlinearity (left panel) and  cubic nonlinearity (right panel).}}
\end{figure}
\par
To investigate how the blow-up time depends on  the power $p$ of  nonlinear term, we perform some numerical experiments.
We use the initial data \eqref{inamp} for $f(u)= u^p, ~p=2, 4, 6$  and \eqref{inamp2} for  $f(u)=-u^p,~p=3, 5$. We set the parameters $\eta_1=\eta_2=1$ in \eqref{hbq1}.
We note that it is possible to find some appropriate values for  $\mu$ to satisfy the condition \eqref{eszmu} for all these cases. Moreover, $I_3(0)$ also
remains negative  for increasing $p$ with the initial data \eqref{inamp} and \eqref{inamp2}. Figure 7 shows the blow-up time with respect to changing $p$.
It can be observed that the blow-up time is decreasing with increasing powers of nonlinearity.
\begin{figure}
\begin{minipage}[t]{0.44\linewidth}
\hspace*{-80pt}
\includegraphics[width=4in]{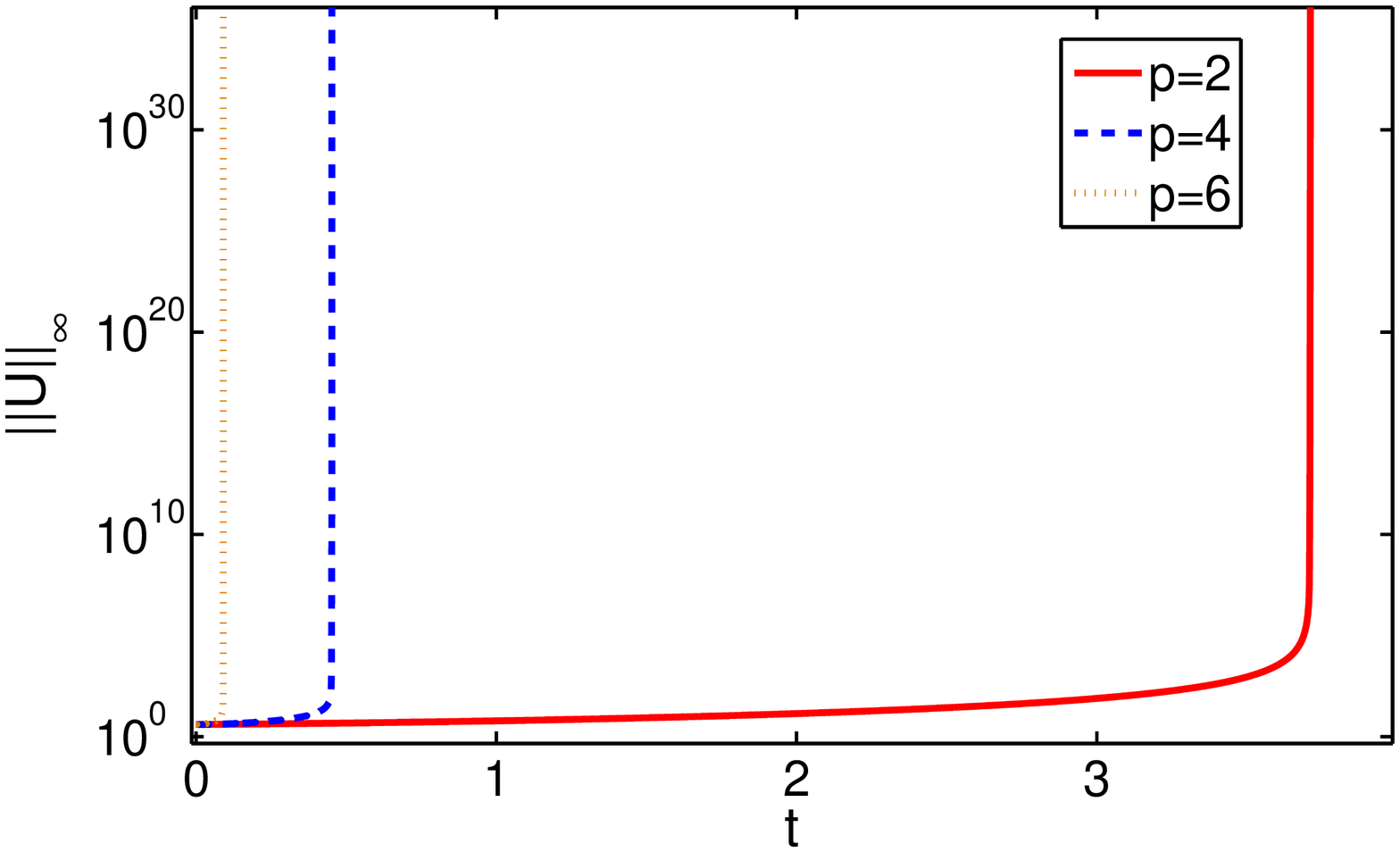}
\end{minipage}
\hspace*{10pt}
\begin{minipage}[t]{0.44\linewidth}
\includegraphics[width=4in]{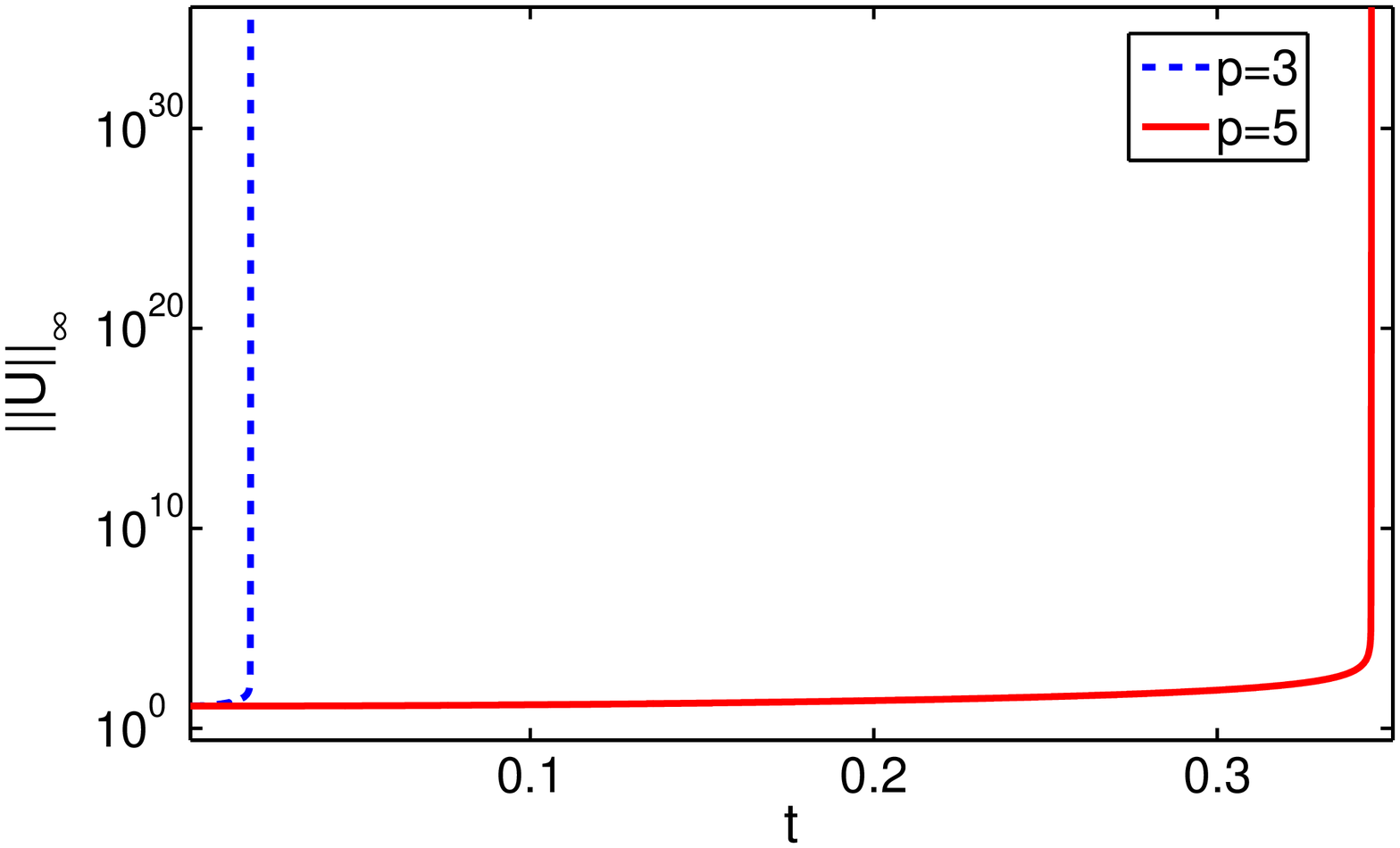}
\end{minipage}

\vspace*{-30pt}
\caption{\small{The blow-up time with respect to changing $p$.}}
\end{figure}

\vspace{30pt}
\noindent \textbf{Acknowledgement:} This work has been supported by the Scientific and Technological Research Council of Turkey
(TUBITAK) under the project MFAG-113F114. The authors  gratefully  acknowledge  to the anonymous reviewers for the constructive comments and valuable suggestions which improved the first draft of paper.


\bibliographystyle{elsart-num-sort}
\bibliography{kaynak}

\begin{thebibliography}{10}
\expandafter\ifx\csname url\endcsname\relax
  \def\url#1{\texttt{#1}}\fi
\expandafter\ifx\csname urlprefix\endcsname\relax\def\urlprefix{URL }\fi

\bibitem{bouss1}
J.~Boussinesq, Th\'{e}orie de l'intumescense liquide appel\'{e}e onde solitaire
  ou de translation, se propageant dans un canal rectangulaire, C. R. Acad.
  Sci. Paris 72 (1871) 755--759.

\bibitem{bouss2}
J.~Boussinesq, Th\'{e}orie g\'{e}n\'{e}rale des mouvements, qui sont
  propag\'{e}s dans un canal rectangulaire, C. R. Acad. Sci. Paris 73 (1871)
  256--260.

\bibitem{bouss3}
J.~Boussinesq, Th\'{e}orie des ondes remous qui se propagent le long d'un canal
  rectangulaire horizontali en communiquant au liquide continu dans ce canal de
  vitesses sensiblement pareilles de la surface au fond, J. Math Pures Appl. 17
  (1872) 55--108.

\bibitem{christov1}
C.~I. Christov, An energy-consistent dispersive shallow-water model, Wave
  Motion 1018 (2000) 1--14.

\bibitem{christov2}
C.~I. Christov, G.~A. Maugin, A.~V. Porubov, On boussinesq's paradigm in
  nonlinear wave propagation, Comptes Rendus M{\'e}canique 335~(9) (2007)
  521--535.

\bibitem{bogolubsky}
I.~L. Bogolubsky, Some examples of inelastic soliton interaction, Comput. Phys.
  Commun. 13 (1977) 149--155.

\bibitem{rosenau1}
P.~Rosenau, Dynamics of nonlinear mass-spring chains near the continuum limit,
  Physics Letters A 118~(5) (1986) 222 -- 227.

\bibitem{rosenau}
P.~Rosenau, Dynamics of dense discrete systems, Progr. Theoret. Phys. 79 (1988)
  1028--1042.

\bibitem{duruk2}
N.~Duruk, A.~Erkip, H.~A. Erbay, A higher\mbox{-}order {Boussinesq} equation in
  locally non\mbox{-}linear theory of one\mbox{-}dimensional non\mbox{-}local
  elasticity, IMA J. Appl. Math. 74 (2009) 97--106.

\bibitem{duruk1}
N.~Duruk, H.~A. Erbay, A.~Erkip, Global existence and blow\mbox{-}up for a
  class of nonlocal nonlinear cauchy problems arising in elasticity,
  Nonlinearity 23 (2010) 107--118.

\bibitem{borluk}
H.~Borluk, G.~M. Muslu, A {F}ourier pseudospectral method for a generalized
  improved {B}oussinesq equation, Numerical Methods for Partial Differential
  Equations 31 (2015) 995--1008.

\bibitem{canuto}
C.~Canuto, A.~Quarteroni, Approximation results for orthogonal polynomials in
  sobolev spaces, Mathematics of Computation 38 (1982) 67--86.

\bibitem{rashid}
A.~Rashid, S.~Akram, Convergence of fourier spectral method for resonant
  long-short nonlinear wave interaction, Applications of Mathematics 55 (2010)
  337--350.

\bibitem{runst}
T.~Runst, W.~Sickel, Sobolev spaces of fractional order, Nemytskij operators,
  and nonlinear partial differential equations, vol.~3, Walter de Gruyter,
  1996.

\bibitem{godefroy}
A.~Godefroy, {B}low-up solutions of a generalized {B}oussinesq equation, {IMA}
  J. Numer. Anal. 60 (1998) 122--138.

\bibitem{kalisch}
J.~L. Bona, H.~Kalisch, Singularity formation in the generalized {Benjamin-Ono}
  {E}quation, Discrete and Continuous Dynamical Systems 11 (2004) 27--45.

\end{thebibliography}
\end{document}